\tikzset{main node/.style={circle,fill=black,draw,minimum width=4pt,inner sep=0pt}}
\newenvironment{customlegend}[1][]{\begingroup \csname pgfplots@init@cleared@structures\endcsname \pgfplotsset{#1}}{\csname pgfplots@createlegend\endcsname \endgroup}
\def\addlegendimage{\csname pgfplots@addlegendimage\endcsname}
\title{\textbf{Maximising $H$-Colourings of Graphs}}
\author{Hannah Guggiari\thanks{Mathematical Institute, University of Oxford, Oxford, OX2 6GG, United Kingdom.}, Alex Scott\footnotemark[1] \thanks{Supported by a Leverhulme Trust Research Fellowship.}}
\date{\today}
\def\thmhead@plain#1#2#3{\thmname{#1}\thmnumber{\@ifnotempty{#1}{ }\@upn{#2}}\thmnote{ {\the\thm@notefont#3}}}
\let\thmhead\thmhead@plain
\theoremstyle{theorem}
\newtheorem{theorem}{Theorem}[section]
\newtheorem{lemma}[theorem]{Lemma}
\newtheorem{conjecture}[theorem]{Conjecture}
\newtheorem{corollary}[theorem]{Corollary}
\newtheorem{proposition}[theorem]{Proposition}
\theoremstyle{definition}
\newtheorem*{definition}{Definition}
\newtheorem*{convention}{Convention}
\begin{document}
\newpage
\maketitle

\begin{abstract}
\noindent
For graphs $G$ and $H$, an $H$-colouring of $G$ is a map $\psi:V(G)\rightarrow V(H)$ such that $ij\in E(G)\Rightarrow\psi(i)\psi(j)\in E(H)$. The number of $H$-colourings of $G$ is denoted by $\hom(G,H)$.

We prove the following: for all graphs $H$ and $\delta\geq3$, there is a constant $\kappa(\delta,H)$ such that, if $n\geq\kappa(\delta,H)$, the graph $K_{\delta,n-\delta}$ maximises the number of $H$-colourings among all connected graphs with $n$ vertices and minimum degree $\delta$. This answers a question of Engbers.

We also disprove a conjecture of Engbers on the graph $G$ that maximises the number of $H$-colourings when the assumption of the connectivity of $G$ is dropped.

Finally, let $H$ be a graph with maximum degree $k$. We show that, if $H$ does not contain the complete looped graph on $k$ vertices or $K_{k,k}$ as a component and $\delta\geq\delta_0(H)$, then the following holds: for $n$ sufficiently large, the graph $K_{\delta,n-\delta}$ maximises the number of $H$-colourings among all graphs on $n$ vertices with minimum degree $\delta$. This partially answers another question of Engbers.
\end{abstract}

\section{Introduction}
Let $G$ be a simple, loopless graph and let $H$ be a simple graph, possibly with loops. A \textit{graph homomorphism} from $G$ to $H$ is a map $\psi:V(G)\rightarrow V(H)$ such that $ij\in E(G)\Rightarrow\psi(i)\psi(j)\in E(H)$. An \textit{$H$-colouring} of $G$ is a graph homomorphism from $G$ to $H$. We denote by $\hom(G,H)$ the number of $H$-colourings of $G$.

Given a class of graphs $\mathcal{G}$ and a fixed graph $H$, it is natural to ask which $G\in\mathcal{G}$ maximises $\hom(G,H)$. Various classes of graphs have been considered (see Cutler \cite{C12} for a survey). For instance, a number of authors, such as Galvin \cite{G13}, have studied the class of all $\delta$-regular graphs for fixed $\delta$; others, including Loh, Pikhurko and Sudakov \cite{LPS10}, have investigated the class of all graphs with $n$ vertices and $m$ edges. In this paper, we consider the class of all graphs with minimum degree at least $\delta$. This class was studied by Engbers \cite{E15,E17} who raised a number of questions and conjectures. We will answer two of these and provide a partial answer to a third.

In Section \ref{sec:connectedgraph}, we consider the case when $\mathcal{G}$ is the set of all \textit{connected} graphs on $n$ vertices with minimum degree at least $\delta$. For this $\mathcal{G}$ and any non-regular graph $H$, Engbers \cite{E17} showed that, for any fixed $\delta\geq2$ and $n$ sufficiently large, $\hom(G,H)$ is maximised uniquely by $G=K_{\delta,n-\delta}$. In this paper, we will extend this result by showing that it holds for all $\delta\geq3$ and for all graphs $H$. This answers a question posed by Engbers \cite{E17}. In the case where $\delta=2$ and $H$ is any graph, Engbers \cite{E15} showed that the number of $H$-colourings is maximised by one of $K_{2,n-2}$, $\frac n3K_3$ or $\frac n4K_{2,2}$ (depending on the structure of $H$).

An $H$-colouring of $G$ requires that each component of $G$ is mapped to a component of $H$. As we are only considering connected graphs $G$, each $H$-colouring of $G$ maps $G$ to a single component of $H$. We therefore begin with the case when $H$ is connected.

\begin{theorem}
\label{thm:connectedgraph}
For every $\delta\geq3$ and every connected graph $H$, there exists a constant $\kappa(\delta,H)$ such that the following holds: if $n\geq\kappa(\delta,H)$ and $G$ is a connected graph on $n$ vertices with minimum degree at least $\delta$, then we have $\hom(G,H)\leq\hom(K_{\delta,n-\delta},H)$. Further, if $H$ is not a complete looped graph or a complete balanced bipartite graph, we have equality if and only if $G=K_{\delta,n-\delta}$.
\end{theorem}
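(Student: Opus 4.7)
The plan is to combine an explicit asymptotic evaluation of $\hom(K_{\delta,n-\delta},H)$ with a matching upper bound on $\hom(G,H)$ obtained via a minimum-degree vertex. Set $k=\Delta(H)$ and let $A$ denote the size-$\delta$ side of $K_{\delta,n-\delta}$. An $H$-colouring is determined by a map $\sigma\colon A\to V(H)$ together with an independent colour choice in the common neighbourhood $N_H^{\cap}(\sigma(A)):=\bigcap_{a\in A}N_H(\sigma(a))$ for each of the $n-\delta$ vertices of the other side, so
\[
\hom(K_{\delta,n-\delta},H)=\sum_{\sigma\colon A\to V(H)}\bigl|N_H^{\cap}(\sigma(A))\bigr|^{n-\delta}=(M+o(1))\,k^{n-\delta},
\]
where $M:=|\{\sigma:|N_H^{\cap}(\sigma(A))|=k\}|\ge 1$ and every non-saturating $\sigma$ contributes at most $(k-1)^{n-\delta}$.

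For the upper bound on $\hom(G,H)$, pick $v\in V(G)$ with $d_G(v)=\delta$, set $A':=N_G(v)$ and $B:=V(G)\setminus(A'\cup\{v\})$, and partition $H$-colourings of $G$ by $\tau=\psi|_{A'}$. Since $N_G(v)=A'$, the choice of $\psi(v)$ decouples from $B$, so
\[
\hom(G,H)=\sum_{\tau}\bigl|N_H^{\cap}(\tau(A'))\bigr|\cdot F(\tau),
\]
where $F(\tau)$ counts extensions to $B$ and the sum is over $\tau:A'\to V(H)$ that are valid on $G[A']$. I would bound $F(\tau)$ by processing the vertices of $B$ in a BFS order rooted at $A'\cup\{v\}$: since $G$ is connected, every $b\in B$ has at least one already-coloured neighbour, leaving at most $k$ choices for $\psi(b)$ and giving $F(\tau)\le k^{n-\delta-1}$. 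This matches $K_{\delta,n-\delta}$ precisely when $|N_H^{\cap}(\tau(A'))|=k$, but is otherwise loose.

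The main obstacle is tightening this crude bound so as to beat $(M+o(1))\,k^{n-\delta}$ whenever $G\ne K_{\delta,n-\delta}$. The strategy is to identify additional local savings: any $b\in B$ with $N_G(b)\subseteq A'$ is a "twin" of $v$ and contributes a factor $|N_H^{\cap}(\tau(A'))|$ rather than $k$; any edge inside $G[B]$ or, more generally, any vertex $b$ that has two or more already-coloured neighbours when it is processed, contributes constraints that strictly decrease $F(\tau)$. A case analysis, splitting $G$ according to how many $b\in B$ are twins of $v$, how many edges lie inside $G[B]$, and which $\tau$'s saturate $|N_H^{\cap}|=k$, should allow one to convert "$G\ne K_{\delta,n-\delta}$" into a total deficit of order $k^{n-\delta}$, yielding both the inequality and the strict-inequality uniqueness statement. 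The exceptional families appear precisely where these savings vanish identically in $\tau$: for $H$ a complete looped graph every common neighbourhood already has size $k$, so the "twin" saving collapses to equality; and for $H=K_{k,k}$ the bipartite symmetry of $H$ causes every connected bipartite $G$ of minimum degree $\delta$ to achieve the same count, ruling out uniqueness in a structural (rather than quantitative) way.
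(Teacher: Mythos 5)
Your computation of $\hom(K_{\delta,n-\delta},H)=(M+o(1))k^{n-\delta}$ (with $M=s(\delta,H)$ in the paper's notation) is fine and matches what the paper does, but the upper-bound half of your argument has a genuine gap at exactly the point where the real work lies. The claimed ``local savings'' do not exist in the generality you need: a vertex $b$ that is processed with two (or more) already-coloured neighbours does \emph{not} in general contribute a strict decrease to $F(\tau)$, because two vertices of $H$ (even distinct, even non-adjacent) can have $k$ common neighbours; the only quantified saving available is Engbers' path-counting lemma (Lemma \ref{lem:pathcounting}), which requires a path with at least two internal vertices between precoloured endpoints and yields a factor $(k^2-1)/k^2$ per such path. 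Consequently, a single edge inside $G[B]$, or a single non-twin vertex, buys you nothing, and your starting bound $\sum_{\tau}|N_H^{\cap}(\tau(A'))|\,k^{n-\delta-1}$ is off from the target by a factor of up to $|V(H)|^{\delta}$; to close that gap you need on the order of $k^{2}\delta\log|V(H)|$ \emph{disjoint} savings, each certified by a genuine length-$\geq 4$ path structure. Your sketch gives no mechanism for locating that many disjoint structures in an arbitrary connected $G\neq K_{\delta,n-\delta}$, nor for handling the colourings $\tau$ whose common neighbourhood is already of size $k$ (for which the twin saving is zero and only the deficient-$\tau$ terms decay).

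This is precisely where the paper's proof does something your proposal omits entirely: it shows by successive counting arguments (each powered by Lemma \ref{lem:pathcounting}) that a maximiser $G$ has boundedly many disjoint cycles, then invokes the Erd\H{o}s--P\'osa theorem to extract a bounded set $A$ with $G\setminus A$ a forest, then bounds the number of non-trivial forest components, the longest path in each, and the number of leaves of each pruned tree, until $G$ is forced to be an independent set of size $n-O(1)$ attached to a bounded set $L$; only then does a pigeonhole argument produce a $\delta$-set $Y$ dominating $cn$ vertices, and a final two-case comparison (colourings of $Y$ in $S(\delta,H)$ versus not) establishes both the inequality and uniqueness. Without some substitute for this global structural reduction, your ``case analysis, splitting $G$ according to how many $b\in B$ are twins'' cannot be completed: anchoring on one minimum-degree vertex $v$ sees only a $\delta$-sized window of $G$, while the deficit you must exhibit is a property of the whole graph. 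Your discussion of the exceptional cases ($H$ complete looped or $K_{k,k}$) is correct in spirit and matches the paper's treatment of those two cases, but it does not repair the main argument.
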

\noindent
Extending this result to all graphs $H$ follows as an easy corollary. If $H$ has $h$ components $H_1,\dots H_h$, then $\hom(G,H)=\hom(G,H_1)+\dots+\hom(G,H_h)$ because $G$ is a connected graph. For $n$ sufficiently large, $G=K_{\delta,n-\delta}$ maximises $\hom(G,H_i)$ for each component $H_i$ and so $G=K_{\delta,n-\delta}$ also maximises $\hom(G,H)$. 

\begin{corollary}
\label{cor:connected}
For every $\delta\geq3$ and every graph $H$, there exists a constant $\kappa(\delta,H)$ such that the following holds: if $n\geq\kappa(\delta,H)$ and $G$ is a connected graph on $n$ vertices with minimum degree at least $\delta$, then we have $\hom(G,H)\leq\hom(K_{\delta,n-\delta},H)$. Further, if $H$ has a component which is neither a complete looped graph nor a complete balanced bipartite graph, we have equality if and only if $G=K_{\delta,n-\delta}$.
\end{corollary}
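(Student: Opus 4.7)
The plan is to reduce the corollary directly to Theorem \ref{thm:connectedgraph} by decomposing $H$ into its connected components. The key observation, already highlighted in the excerpt, is that since $G$ is connected, any homomorphism $\psi:V(G)\to V(H)$ must have image contained in a single component of $H$. Writing $H$ as the disjoint union of its components $H_1,\dots,H_h$, this gives the identity
\[
\hom(G,H)=\sum_{i=1}^{h}\hom(G,H_i),
\]
valid for every connected $G$, and in particular for $G=K_{\delta,n-\delta}$.

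Next, I would apply Theorem \ref{thm:connectedgraph} to each component $H_i$ separately. That theorem provides a constant $\kappa(\delta,H_i)$ such that, for $n\geq\kappa(\delta,H_i)$, the bound $\hom(G,H_i)\leq\hom(K_{\delta,n-\delta},H_i)$ holds for every connected $G$ on $n$ vertices with minimum degree at least $\delta$. Setting $\kappa(\delta,H):=\max_{1\leq i\leq h}\kappa(\delta,H_i)$ and summing the per-component inequalities yields $\hom(G,H)\leq\hom(K_{\delta,n-\delta},H)$, as required.

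For the equality case, suppose some component, say $H_1$, is neither a complete looped graph nor a complete balanced bipartite graph. The uniqueness clause of Theorem \ref{thm:connectedgraph} then gives strict inequality $\hom(G,H_1)<\hom(K_{\delta,n-\delta},H_1)$ whenever $G\neq K_{\delta,n-\delta}$, while the remaining components still contribute $\hom(G,H_i)\leq\hom(K_{\delta,n-\delta},H_i)$. Summing, we obtain a strict inequality for the total, so equality in $\hom(G,H)\leq\hom(K_{\delta,n-\delta},H)$ forces $G=K_{\delta,n-\delta}$.

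There is no real obstacle here: the entire argument is a one-line combinatorial identity plus an application of Theorem \ref{thm:connectedgraph} component by component. The only minor bookkeeping point is to take the maximum of the finitely many constants $\kappa(\delta,H_i)$ to obtain a single threshold $\kappa(\delta,H)$ that works for all components simultaneously.
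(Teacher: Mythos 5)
Your proposal is correct and follows essentially the same route as the paper: decompose $H$ into components, use that a connected $G$ maps entirely into one component so that $\hom(G,H)=\sum_i\hom(G,H_i)$, apply Theorem \ref{thm:connectedgraph} to each $H_i$ with $\kappa(\delta,H)$ taken as the maximum of the component thresholds, and derive the equality case from the uniqueness clause for the non-exceptional component. The paper's own derivation is the short paragraph preceding the corollary statement, and your write-up just spells out the same argument in slightly more detail.
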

\noindent
We may identify a proper $q$-colouring of a graph $G$ with a graph homomorphism from $G$ into $K_q$. Therefore, counting the number of proper $q$-colourings of $G$ corresponds to counting the number of proper graph homomorphisms from $G$ into $K_q$. As $K_q$ is a connected graph, the following corollary also follows immediately from Theorem \ref{thm:connectedgraph}. This answers another question posed by Engbers \cite{E17}.

\begin{corollary}
Fix $\delta\geq3$ and $q>2$. Then, for $n$ sufficiently large, $K_{\delta,n-\delta}$ uniquely maximizes the number of proper $q$-colourings amongst all connected graphs on $n$ vertices with minimum degree at least $\delta$.
\end{corollary}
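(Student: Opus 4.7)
The plan is to obtain this corollary as an immediate consequence of Theorem \ref{thm:connectedgraph} applied with $H=K_q$. First I would record the standard correspondence between proper $q$-colourings and homomorphisms to $K_q$: a map $\psi:V(G)\to[q]$ is a proper $q$-colouring of $G$ exactly when $ij\in E(G)$ forces $\psi(i)\neq\psi(j)$, which, since any two distinct vertices of $K_q$ are adjacent, is the same as $\psi(i)\psi(j)\in E(K_q)$. Hence $\hom(G,K_q)$ equals the number of proper $q$-colourings of $G$. As $K_q$ is connected for every $q\geq1$, Theorem \ref{thm:connectedgraph} applies and yields, for all sufficiently large $n$, the bound $\hom(G,K_q)\leq\hom(K_{\delta,n-\delta},K_q)$ for every connected $G$ on $n$ vertices with minimum degree at least $\delta$.

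For the uniqueness clause, it remains to check that $K_q$ satisfies the hypothesis of the second part of Theorem \ref{thm:connectedgraph}, namely that it is neither a complete looped graph nor a complete balanced bipartite graph. The first is immediate because $K_q$, viewed as a simple loopless graph, carries no loops. For the second, the assumption $q>2$ is used: $K_q$ then contains a triangle, so it is not bipartite, and in particular is not a complete balanced bipartite graph. The uniqueness part of Theorem \ref{thm:connectedgraph} therefore applies and gives equality if and only if $G=K_{\delta,n-\delta}$, which is the required uniqueness.

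No substantive obstacle arises here: once Theorem \ref{thm:connectedgraph} is in hand, the entire argument reduces to the bijection between proper $q$-colourings and homomorphisms into $K_q$, together with the two-line verification that $K_q$ is connected and lies outside both exceptional families. All the real work has been done in proving Theorem \ref{thm:connectedgraph}; the present corollary is just a specialisation.
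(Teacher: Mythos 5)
Your proposal is correct and follows the same route as the paper: identify proper $q$-colourings with homomorphisms into $K_q$, note that $K_q$ is connected, and invoke Theorem \ref{thm:connectedgraph}. Your explicit check that $K_q$ (for $q>2$) is neither a complete looped graph nor a complete balanced bipartite graph, which is needed for the uniqueness clause, is a small but welcome addition that the paper leaves implicit.
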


A natural extension to Corollary \ref{cor:connected} is to allow $G$ to have more than one component. Here the picture is less complete.

If $H$ is the graph consisting of a single edge with one of the vertices looped, then counting the number of $H$-colourings of a graph $G$ is equivalent to counting the number of independent sets in $G$. Extending previous work on this topic, Cutler and Radcliffe \cite{CR14} gave complete results for all values of $n$ and $\delta$. In particular, if $n\geq2\delta$, then $K_{\delta,n-\delta}$ is the unique graph which maximises $\hom(G,H)$.

Galvin \cite{G13} conjectured that, for any $H$, if $G$ was a $\delta$-regular graph on $n$ vertices, then $\hom(G,H)\leq\max\{\hom(K_{\delta,\delta},H)^{n/2\delta},\hom(K_{\delta+1},H)^{n/(\delta+1)}\}$. If this were true, it would mean that, whenever $2\delta(\delta+1)|n$, the $\delta$-regular graph on $n$ vertices which maximises the number of $H$-colourings is either $\frac n{2\delta}K_{\delta,\delta}$ or $\frac n{\delta+1}K_{\delta+1}$. Galvin's conjecture was shown to be false by Sernau \cite{S18}. He produced an infinite family of counterexamples as follows: fix $\delta$ and any simple loopless graph $H$ with no $(\delta+1)$-clique. Take any connected $\delta$-regular graph $G$ on $n<2\delta$ vertices with $\hom(G,H)>0$. He proved that there existed $k\in\mathbb{N}$ such that $\hom(G,kH)>\max\{\hom(K_{\delta+1},kH)^{n/(\delta+1)},\hom(K_{\delta,\delta},kH)^{n/2\delta}\}$ and hence that Galvin's conjecture was false.

Engbers \cite{E15} considered a similar question to Galvin but only when the order of $G$ was sufficiently large. He asked which graph on $n$ vertices with minimum degree $\delta$ maximises the number of $H$-colourings as the value of $n$ increases.

For general $H$ and $\delta=1$ or $\delta=2$, Engbers showed that $\hom(G,H)$ is maximised by one of $\frac{n}{\delta+1}K_{\delta+1}$, $\frac{n}{2\delta}K_{\delta,\delta}$ or $K_{\delta,n-\delta}$ (where the graph that maximises $\hom(G,H)$ depends on the structure of $H$). These results led him to make the following conjecture.

\begin{conjecture} [\cite{E15}]
\label{conj:main}
Fix $\delta\geq1$ and any graph $H$. Let $G$ be a graph on $n$ vertices with minimum degree at least $\delta$. There exists a constant $c(\delta,H)$ such that, for $n\geq c(\delta,H)$, we have
\[
\hom(G,H)\leq\max\big\{\hom(K_{\delta+1},H)^{\frac n{\delta+1}},\hom(K_{\delta,\delta},H)^{\frac n{2\delta}},\hom(K_{\delta,n-\delta},H)\big\}.
\]
\end{conjecture}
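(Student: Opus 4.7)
My plan is to disprove Conjecture~\ref{conj:main} rather than to prove it. The three candidates on the right-hand side are restrictive: when $H$ has many components, a disjoint union of small $\delta$-regular components for $G$ can exploit a multiplicativity that the three candidates cannot match. The idea is to lift Sernau's counterexamples to Galvin's conjecture~\cite{S16} into Engbers' minimum-degree framework, where the new ingredient is also to beat the $K_{\delta,n-\delta}$ candidate.

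Fix $\delta\geq 4$ and let $G_0$ be a connected $\delta$-regular graph on $m$ vertices with $\delta+1<m<2\delta$; for $\delta=4$ take $G_0=K_{2,2,2}$ (the cocktail-party graph), so $m=6$. Let $H_0$ be a loopless graph with $\omega(H_0)\leq\delta$ and $\hom(G_0,H_0)>0$; for instance $H_0=K_\delta$. For a constant $k$ to be chosen later, set $H=kH_0$ and, whenever $m\mid n$, let $G_n=(n/m)G_0$, which is $\delta$-regular.

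Since each connected source component maps into a single component of $H$, one has $\hom(G_n,H)=(k\hom(G_0,H_0))^{n/m}$, while the three candidate bounds become $\hom(K_{\delta+1},H)^{n/(\delta+1)}=0$ (because $H_0$ is loopless and $(\delta+1)$-clique-free), $\hom(K_{\delta,\delta},H)^{n/(2\delta)}=(k\hom(K_{\delta,\delta},H_0))^{n/(2\delta)}$, and $\hom(K_{\delta,n-\delta},H)=k\hom(K_{\delta,n-\delta},H_0)$, the last of which is asymptotically $k\cdot c^{n}$ up to a polynomial factor in $n$, where $c$ denotes the maximum common neighbourhood of $\delta$ vertices in $H_0$. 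The $K_{\delta,n-\delta}$ candidate carries only a single factor of $k$, whereas $\hom(G_n,H)$ carries $k^{n/m}$; the former is beaten once $k\hom(G_0,H_0)>c^m$ and $n$ is sufficiently large. The $K_{\delta,\delta}$ candidate is beaten by Sernau's mechanism: since $m<2\delta$ one has $1/m>1/(2\delta)$, so for $k$ sufficiently large $k\hom(G_0,H_0)>(k\hom(K_{\delta,\delta},H_0))^{m/(2\delta)}$, and raising to the $(n/m)$th power preserves this inequality.

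The main obstacle is to pick a single $k$ satisfying both ``large $k$'' thresholds simultaneously. For the concrete choice $\delta=4$, $G_0=K_{2,2,2}$, $H_0=K_4$, a direct calculation gives $\hom(G_0,H_0)=96$, $\hom(K_{4,4},H_0)=1812$, and $c=3$, so a value of $k$ on the order of $100$ comfortably clears both thresholds and yields a counterexample for all sufficiently large $n\in 6\mathbb{Z}$. Residues of $n$ modulo $m$ are handled by attaching a single ad hoc $\delta$-regular component on the remaining vertices, which leaves the exponential rate unchanged. Sernau's inequality is the only non-elementary ingredient, but with the above explicit numerics it reduces to a direct arithmetic check.
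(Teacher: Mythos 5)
Your proposal is correct and follows essentially the same route as the paper: Section~\ref{sec:counterexample} also disproves the conjecture by taking $H$ to be $k$ disjoint copies of a clique and $G$ a disjoint union of small complete multipartite components on fewer than $2\delta$ vertices, in exactly the Sernau spirit you describe, and your numerics ($\hom(K_{2,2,2},K_4)=96$, $\hom(K_{4,4},K_4)=1812$, $c=3$, with $k$ of order $100$ clearing both thresholds) all check out. The paper's version is somewhat stronger --- it uses $H=kK_t$ with $t\leq\delta$ small so that Tur\'an's theorem identifies the \emph{unique} maximiser (a disjoint union of Tur\'an graphs $T_t(t\alpha)$), and it covers every $\delta\geq3$, whereas your requirement $\delta+1<m<2\delta$ for a $\delta$-regular component rules out $\delta=3$ --- but since a single pair $(\delta,H)$ suffices to refute the conjecture, your argument is a complete disproof.
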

\noindent
In Section \ref{sec:counterexample}, we will use similar ideas to Sernau to construct counterexamples to Conjecture \ref{conj:main} whenever $\delta\geq3$.

On the other hand, we can show that Conjecture \ref{conj:main} does hold in certain circumstances. In Section \ref{sec:largedelta}, we will consider the case when the graph $H$ is fixed and $\delta$ and $n$ are sufficiently large. In particular, for each $k\in\mathbb{N}$, we consider the family $\mathcal{H}_k$ of all graphs with maximum degree $k$ that do not contain the complete looped graph on $k$ vertices or $K_{k,k}$ as a component. We will prove the following theorem.

\begin{theorem}
\label{thm:largedelta}
Fix any $k\in\mathbb{N}$. For every graph $H\in\mathcal{H}_k$ and every $\delta\geq\delta_0(H)$, the following holds: there exists a constant $n_0(\delta,H)$ such that, if $n\geq n_0(\delta,H)$ and $G$ is a graph on $n$ vertices with minimum degree $\delta$, then $\hom(G,H)\leq\hom(K_{\delta,n-\delta},H)$. Equality holds if and only if $G=K_{\delta,n-\delta}$.
\end{theorem}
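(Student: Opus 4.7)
The plan is to handle connected $G$ via Corollary \ref{cor:connected} and to control disconnected $G$ by showing that, when $\delta$ is large enough, merging components into a single $K_{\delta,n-\delta}$ strictly increases the number of $H$-colourings. The starting point is the exact formula
\[
\hom(K_{\delta,n-\delta},H) \;=\; \sum_{(v_1,\ldots,v_\delta)\,\in\,V(H)^\delta}\Bigl|\bigcap_{i=1}^\delta N_H(v_i)\Bigr|^{n-\delta},
\]
from which I would set $\alpha := \max_{u \in V(H)} |N_H(u)| \le k$ and let $M := M(\delta, H)$ count the $\delta$-tuples attaining this maximum, so that $\hom(K_{\delta,n-\delta},H) = M\alpha^{n-\delta}(1+o(1))$ as $n \to \infty$.

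The structural core of the argument is the claim that, for $H \in \mathcal{H}_k$, one has $M/\alpha^\delta \to 0$ as $\delta \to \infty$. A short check shows that a $\delta$-tuple is maximising iff every entry has the same neighbourhood $S$ of size $\alpha$, so setting $T_S := \{u : N_H(u) = S\}$, one gets $M = \sum_S |T_S|^\delta$. If $|T_S| = \alpha = k$ then each $s \in S$ is adjacent to every vertex of $T_S$ and so $|N_H(s)| \ge |T_S| = k$; saturating the degree bound, this forces $N_H(s) = T_S$ for every $s \in S$. An edge-symmetry argument then rules out partial overlaps between $S$ and $T_S$, so either $S = T_S$ (producing a $K_k^{\text{loop}}$ component) or $S \cap T_S = \emptyset$ (producing a $K_{k,k}$ component)---both excluded by the $\mathcal{H}_k$ hypothesis. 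Hence $|T_S| \le k-1$ and $M \le |V(H)|(k-1)^\delta$, giving $M/\alpha^\delta \le |V(H)|\bigl((k-1)/k\bigr)^\delta$, which is arbitrarily small for $\delta \ge \delta_0(H)$.

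For connected $G$, Corollary \ref{cor:connected} applies directly; the uniqueness statement is valid because the maximum-degree component of $H$ is, by the $\mathcal{H}_k$ hypothesis, neither a complete looped graph nor a complete balanced bipartite graph. For disconnected $G = G_1 \sqcup \cdots \sqcup G_c$ with $c \ge 2$, I would split the components at the threshold $\kappa(\delta, H)$ supplied by Theorem \ref{thm:connectedgraph}. Large components ($n_i \ge \kappa$) admit the bound $\hom(G_i,H) \le \hom(K_{\delta,n_i-\delta},H) \le 2M\alpha^{n_i-\delta}$ once $\kappa$ is large enough to absorb the sub-leading terms, while small components admit the trivial bound $\hom(G_i,H) \le |V(H)|^\kappa =: C$. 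Writing $c_\ell, c_s$ for the numbers of large and small components, the product is bounded by $C^{c_s}(2M)^{c_\ell}\alpha^{N_\ell - c_\ell\delta}$ and, comparing with $\hom(K_{\delta,n-\delta},H) \ge M\alpha^{n-\delta}$ and using $N_s \ge (\delta{+}1)c_s$, the resulting ratio is dominated by factors of the form $(2M/\alpha^\delta)^{c_\ell - 1}$ and $(C/\alpha^{\delta+1})^{c_s}$. Since both $M/\alpha^\delta$ and $C/\alpha^{\delta+1}$ tend to $0$ as $\delta$ grows, for $\delta \ge \delta_0(H)$ each such factor is at most $1/2$; because $c_\ell + c_s \ge 2$, at least one non-trivial factor is present, forcing the desired strict inequality.

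The main obstacle is the structural lemma on $|T_S|$: pinning down exactly when the maximum common neighbourhood size is realised by as many vertices as possible requires the loop-aware case analysis above, and $H \in \mathcal{H}_k$ is used precisely to eliminate the extremal configurations. A subsidiary concern is that the small-component constant $C = |V(H)|^{\kappa(\delta,H)}$ depends on $\delta$ via $\kappa$, so $\delta_0(H)$ must be chosen large enough that $\alpha^\delta$ still dominates both $M$ and $C$; this is ultimately fine because $\alpha \ge 2$ and $\alpha^\delta$ grows exponentially in $\delta$.
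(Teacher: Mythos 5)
Your overall skeleton (handle connected $G$ by Theorem \ref{thm:connectedgraph}, then show that extra components strictly lose) matches the paper, and your structural lemma is correct and rather nice: a $\delta$-tuple with $k$ common neighbours must consist of vertices sharing a single neighbourhood $S$ with $|S|=k$, and if $|T_S|=k$ the pair $(S,T_S)$ forces a complete looped $K_k$ or a $K_{k,k}$ component, so $H\in\mathcal{H}_k$ gives $s(\delta,H)\leq|V(H)|(k-1)^{\delta}$ and hence $s(\delta,H)/k^{\delta}\to0$. (The paper never needs this; it only uses $\hom(K_{\delta,m-\delta},H)\leq s(\delta,H)k^{m+1-\delta}$ and beats the extra factor $k$ by a factor $k^{-1}$ coming from the second component.) Your argument correctly disposes of the case where every component is large.

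The genuine gap is in the small components. You bound a component $G_i$ with $|G_i|<\kappa(\delta,H)$ trivially by $\hom(G_i,H)\leq|V(H)|^{|G_i|}\leq C:=|V(H)|^{\kappa(\delta,H)}$, and you need the per-component factor $C/\alpha^{\delta+1}=|V(H)|^{\kappa(\delta,H)}/k^{\delta+1}$ to be less than $1$. This never happens: $|V(H)|\geq k$ and $\kappa(\delta,H)>\delta+1$ (the threshold in Theorem \ref{thm:connectedgraph} is far larger than $\delta$, and in any case $K_{\delta,n-\delta}$ only exists for $n\geq2\delta$), so $|V(H)|^{\kappa(\delta,H)}\geq k^{\kappa(\delta,H)}>k^{\delta+1}$. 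Increasing $\delta_0(H)$ cannot repair this, because $\kappa(\delta,H)$ grows with $\delta$ at least as fast as $\delta$ does; the remark that ``$\alpha^{\delta}$ grows exponentially'' does not address the fact that $C$ grows at least as fast. What is actually needed, and what the paper supplies as Lemma \ref{lem:boundlargedelta}, is a non-trivial bound valid for \emph{every} connected graph $G_i$ with minimum degree $\delta\geq\delta_0(H)$, namely $\hom(G_i,H)<k^{|G_i|-1}$, with no lower bound on $|G_i|$. This is proved by combining Proposition \ref{prop:disjointcycles} (minimum degree $3d$ forces $d$ disjoint cycles, so $G_i$ has at least $\delta/3$ of them) with Lemma \ref{lem:pathcounting}, each cycle contributing a multiplicative loss of $(1-k^{-2})$; taking $\delta_0(H)\gtrsim k^{2}\log|V(H)|$ makes the total loss beat the factor $k$. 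With that bound in hand, each additional component costs a factor $k^{-1}$ against $\hom(K_{\delta,n-\delta},H)\geq s(\delta,H)k^{n-\delta}\geq k^{n-\delta}$, which simultaneously bounds the number of components by $\delta-1$ (guaranteeing, by pigeonhole, a component large enough for Theorem \ref{thm:connectedgraph} to apply) and rules out any second component. Without an ingredient of this kind your comparison for graphs containing a small component does not close.
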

\noindent
The graph $K_{\delta,n-\delta}$ need not maximise the number of $H$-colourings if $H$ has maximum degree $k$ and contains either the complete looped graph on $k$ vertices or $K_{k,k}$ as a component (i.e. $H\notin\mathcal{H}_k$). This is discussed in more detail in Section \ref{sec:conclusion}.

\begin{convention}
Throughout this paper, $G$ will be a simple graph without loops. We will adopt the same convention for vertex degrees as Engbers \cite{E17}: for any vertex $v\in V(H)$, we define $d(v)=|\{w\in V(H):vw\in E(H)\}|$. In particular, adding a loop to a vertex in $H$ increases the degree by one.
\end{convention}

\section{Proof of Theorem \ref{thm:connectedgraph}}
\label{sec:connectedgraph}
The following definition was introduced by Engbers \cite{E15}. We will use it in the proof of Theorem \ref{thm:connectedgraph} as well as in Section \ref{sec:largedelta}.

\begin{definition}
For any graph $H$ with maximum degree $k$ and $\delta\geq1$, we define $S(\delta,H)$ to be the set of vectors in $V(H)^{\delta}$ such that the elements of the vector have $k$ neighbours in common. We define $s(\delta,H)=|S(\delta,H)|$. As $H$ has at least one vertex of degree $k$, we have $s(\delta,H)\geq1$.
\end{definition}
\noindent
We will need the following theorem of Erd\H{o}s and P\'{o}sa.

\begin{theorem}[\cite{D10}]
\label{thm:erdos-posa}
There is a function $f:\mathbb{N}\rightarrow\mathbb{R}$ such that, given any $d\in\mathbb{N}$, every graph contains either $d$ disjoint cycles or a set of at most $f(d)$ vertices meeting all its cycles.
\end{theorem}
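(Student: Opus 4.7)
The plan is to prove the theorem by induction on $d$. The base case $d=1$ is immediate with $f(1)=0$: if $G$ has any cycle then it contains one disjoint cycle, and otherwise the empty set trivially meets all cycles.

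For the inductive step, the main engine is the classical short-cycle lemma: every graph of minimum degree at least $3$ on $n$ vertices contains a cycle of length at most $2\log_2 n + 1$. I would prove this by running a breadth-first search from an arbitrary vertex; if no such short cycle existed, the BFS layers would keep branching by a factor of at least $2$ per level down to depth $\log_2 n$, forcing more than $n$ vertices in the tree and giving a contradiction. A first repeated BFS edge then closes a cycle of the advertised length.

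To apply the lemma to an arbitrary graph $G$, I would first reduce $G$ to a graph $G'$ of minimum degree at least $3$ by iteratively deleting vertices of degree at most $1$ (which lie on no cycle) and suppressing vertices of degree $2$ (merging the two incident edges into a single edge between the neighbours, temporarily allowing multi-edges and loops as intermediate objects). Disjoint cycle packings and feedback vertex sets translate faithfully between $G$ and $G'$, so it suffices to prove the statement for $G'$.

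Finally, I would iterate: in $G'$, find a short cycle $C$ of length $O(\log|V(G')|)$, delete $V(C)$, re-clean, and apply the induction hypothesis to the remainder with parameter $d-1$. Either this produces $d$ disjoint cycles and we are done, or it produces a feedback vertex set that, together with $V(C)$, bounds the feedback vertex set of $G'$. The main obstacle is that a naive execution would make the short-cycle length (and hence $f(d)$) depend on $|V(G)|$ rather than only on $d$. I would resolve this with a threshold argument: fix some $N(d)$; if the cleaned graph has at most $N(d)$ vertices, then its entire vertex set is a feedback vertex set of size bounded in terms of $d$, while otherwise $|V(G')|$ is so large that iterating the short-cycle-plus-induction step is forced to yield $d$ vertex-disjoint cycles before the cleaned remainder ever drops below threshold. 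Balancing the two regimes yields the bound $f(d)=O(d\log d)$.
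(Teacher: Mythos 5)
The paper does not prove this statement; it is quoted from Diestel's book \cite{D10}, so your argument has to be measured against the standard proof given there.

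Your ingredients (the short-cycle bound for minimum degree $3$, the cleaning operation, induction on $d$, a size threshold) are the right ones, and $f(d)=O(d\log d)$ is the correct order of magnitude. But the step carrying all the weight is false as stated. You claim that once the cleaned graph $G'$ (minimum degree at least $3$) has more than $N(d)$ vertices, iterating ``find a short cycle, delete it, re-clean'' is \emph{forced} to produce $d$ disjoint cycles before the remainder drops below threshold. Consider $K_{3,n}$: it has minimum degree $3$, arbitrarily many vertices, and does not contain even two disjoint cycles, since every cycle uses at least two of the three vertices in the small class. On this graph your iteration finds one $4$-cycle, and re-cleaning then annihilates the entire remainder (each remaining vertex of the large class loses two of its three neighbours and is deleted, after which the last small-class vertex is isolated). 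So you land in the feedback-vertex-set branch after one step no matter how large $n$ is, and in that branch your bound is $f(d-1)+|C|$ with $|C|=O(\log|V(G')|)$, which depends on $n$ and not only on $d$. The example also shows that the cascade you pass over with ``re-clean'' can destroy everything: in a graph of minimum degree exactly $3$ that is not regular, a vertex with two or three neighbours on the deleted cycle drops below degree $2$.

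The standard proof repairs precisely this point. The lemma ``at least $s_k$ vertices implies $k$ disjoint cycles'' is proved for \emph{cubic} multigraphs, where a shortest cycle is induced and each of its vertices sends exactly one edge outside it, so deleting the cycle creates only degree-$2$ vertices and the suppressed remainder loses at most $2|C|$ vertices --- no cascade. The passage from a general graph $G$ to the cubic case is then not your global cleaning (which cannot make $K_{3,n}$ cubic, as it has no vertices of degree at most $2$) but the choice of a \emph{maximal} subgraph $H\subseteq G$ all of whose degrees are $2$ or $3$: if the set $U$ of degree-$3$ vertices of $H$ is large, the cubic lemma applied to the suppression of $H$ yields the $k$ disjoint cycles; otherwise one uses the maximality of $H$ to show that $U$, together with at most $k-1$ further vertices chosen from cycles meeting $H$ in at most one vertex, meets every cycle of $G$. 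You need this maximality argument, or some substitute for it, to handle graphs in which a bounded set of high-degree vertices carries all the cycles; your sketch does not contain one.
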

\noindent
We will frequently use the following lemma of Engbers.

\begin{lemma}[\cite{E15}] 
\label{lem:pathcounting}
Suppose $H$ is not the complete looped graph on $k$ vertices or $K_{k,k}$. Then, for any two vertices $i$, $j$ of $H$ and for $r\geq4$, there are at most $(k^2-1)k^{r-4}$ $H$-colourings of $P_r$ that map the initial vertex of that path to $i$ and the terminal vertex to $j$.
\end{lemma}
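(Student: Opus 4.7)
My plan is to prove the lemma by induction on $r$, with the base case $r=4$ doing all the real work and the inductive step reducing to one line. Let $A$ denote the adjacency matrix of $H$, with a $1$ on the diagonal at each looped vertex, so that the number of $H$-colourings of $P_r$ sending its endpoints to $i$ and $j$ is exactly $(A^{r-1})_{ij}$. Both steps will rest on the decomposition
\[
(A^{r-1})_{ij} = \sum_{w \in N(j)} (A^{r-2})_{iw},
\]
which has at most $k$ terms because $|N(j)| \le k$.

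In the base case the same decomposition becomes $(A^3)_{ij} = \sum_{u \in N(i)} |N(u) \cap N(j)|$, yielding the trivial bound $|N(i)|\cdot k \le k^2$. Suppose for contradiction that $(A^3)_{ij} = k^2$; equality throughout forces $|N(i)|=k$ and $N(u)=N(j)$ with $|N(u)|=k$ for every $u\in N(i)$. Writing $S=N(i)$ and $T=N(j)$, I first note that any $u\in S$ satisfies $i\in N(u)=T$, so $i\in T$; and for any $x\in T$ and $u\in S$, the relation $x\in T=N(u)$ gives $u\in N(x)$, so $S\subseteq N(x)$, which forces $N(x)=S$ by $|N(x)|\le k=|S|$. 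Now if there exists some $w\in S\cap T$, then the pair of identities $N(w)=T$ and $N(w)=S$ give $S=T$, each vertex of $S$ has neighbourhood $S$ (with a loop at itself), and since nothing outside $S$ is adjacent to anything in $S$, the set $S$ constitutes a complete looped $K_k$ component of $H$. Otherwise $S\cap T=\emptyset$ and $S\cup T$ forms a $K_{k,k}$ component. Either outcome contradicts the hypothesis on $H$, giving the desired strict inequality $(A^3)_{ij}\le k^2-1$.

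For the inductive step ($r \ge 5$) I apply the inductive hypothesis to each summand in the decomposition: each $(A^{r-2})_{iw}$ is at most $(k^2-1)k^{r-5}$, and there are at most $k$ summands, so $(A^{r-1})_{ij} \le k\cdot(k^2-1)k^{r-5} = (k^2-1)k^{r-4}$. The whole difficulty of the argument is concentrated in the base case: the key point is that the extremal equation $(A^3)_{ij} = k^2$ is rigid enough to force $H$ to contain a complete looped $K_k$ or a $K_{k,k}$ as a component, and tracking the mutual structure of $N(i)$ and $N(j)$ to reach the dichotomy $S=T$ versus $S\cap T=\emptyset$ is the heart of the argument.
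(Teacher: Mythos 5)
Your proof is correct and complete. Note that the paper does not actually prove this lemma --- it is quoted from Engbers \cite{E15} --- so there is no argument in the source to compare against; your write-up supplies a self-contained justification. The transfer-matrix formulation (number of colourings $=(A^{r-1})_{ij}$ with a diagonal $1$ at each looped vertex), the inductive step via $(A^{r-1})_{ij}=\sum_{w\in N(j)}(A^{r-2})_{iw}$ with at most $k$ summands, and the rigidity analysis of the extremal case are all sound: $(A^3)_{ij}=\sum_{u\in N(i)}|N(u)\cap N(j)|=k^2$ does force $|N(i)|=k$ and $N(u)=N(j)$ with $|N(u)|=k$ for every $u\in N(i)$, and your dichotomy $S=T$ (giving a complete looped $K_k$ component) versus $S\cap T=\emptyset$ (giving a $K_{k,k}$ component) is correctly derived, including the check that no vertex outside $S\cup T$ can be adjacent to it and that the loops are present or absent as required. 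One remark: what your argument actually establishes is that equality forces $H$ to \emph{contain} a complete looped $K_k$ or a $K_{k,k}$ as a \emph{component}, which is the correct reading of the hypothesis --- as literally stated the lemma fails for disconnected $H$ (e.g.\ a looped $K_k$ together with an isolated vertex, taking $i=j$ inside the looped clique gives exactly $k^{r-2}$ colourings of $P_r$). Since the paper only ever applies the lemma to connected $H$ or to individual components of $H$, this is a clarification in your favour rather than a gap.
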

\noindent
We will also need the following simple observation.

\begin{proposition}
\label{prop:colouring}
Let $G$ and $H$ be graphs with $G$ connected and $X\subseteq V(G)$. Suppose the vertices of $X$ have already been mapped to vertices of $H$. The remaining vertices of $G$ can be mapped into $V(H)$ in such a way that there are at most $\Delta(H)$ choices for each vertex of $V(G)\backslash X$.
\end{proposition}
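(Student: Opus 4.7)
The plan is to order the vertices of $V(G)\setminus X$ in a way that exposes each one, at the moment it is processed, to a neighbour whose image in $H$ is already determined. Using the connectedness of $G$ (and assuming $X$ is non-empty, which is the meaningful case), I would run a breadth-first search from $X$: this yields an enumeration $v_1, v_2, \ldots, v_m$ of $V(G)\setminus X$ such that each $v_i$ has at least one neighbour in $X \cup \{v_1, \ldots, v_{i-1}\}$. Existence of such an ordering is the only place the hypothesis of connectedness of $G$ is used.

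With this ordering fixed, I would then map the $v_i$ one at a time in order. When we come to map $v_i$, it has some neighbour $u \in X \cup \{v_1,\ldots,v_{i-1}\}$ that has already been assigned to a vertex $\psi(u) \in V(H)$. Since the map we are building must be a homomorphism, $\psi(v_i)$ is forced to lie in the $H$-neighbourhood of $\psi(u)$, which has size $d_H(\psi(u)) \leq \Delta(H)$. Hence $v_i$ has at most $\Delta(H)$ choices, as claimed.

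I do not anticipate a real obstacle here; the statement is essentially a bookkeeping observation combining a search order on $G$ with the defining property of a homomorphism. If one wishes to be fully careful about the case $X=\emptyset$, one can either exclude it from the hypothesis or simply pick an arbitrary starting vertex $v_1 \in V(G)$ and assign it any image in $V(H)$ (absorbing this single choice into the application), after which the argument above proceeds verbatim from $v_2$ onwards.
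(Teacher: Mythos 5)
Your proposal is correct and follows essentially the same route as the paper: order $V(G)\setminus X$ by a breadth-first search (equivalently, by increasing distance from $X$), so that each vertex has an already-coloured neighbour when it is processed, forcing its image into a neighbourhood of size at most $\Delta(H)$. Your remark on the degenerate case $X=\emptyset$ is a minor refinement the paper does not bother with, since in every application $X$ is non-empty.
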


\begin{proof}
Because $G$ is connected, there is a path from each vertex of $V(G)\backslash X$ to $X$. We order the vertices of $V(G)\backslash X$ by increasing distance from $X$. Each vertex $v\in V(G)\backslash X$ either has a neighbour in $X$ or a neighbour before it in the ordering. Therefore, when we come to colour $v$, one of its neighbours has already been coloured so there are at most $\Delta(H)$ choices for $v$.
\end{proof}

\begin{proof}[Proof of Theorem \ref{thm:connectedgraph}]
Let $\delta\geq3$ be fixed and let $H$ be a connected graph with maximum degree $k\in\mathbb{N}$. We have $|V(H)|\geq k$. There are two special cases to look at before we consider a general $H$.
\begin{enumerate}
\item \textit{$H$ is the complete looped graph on $k$ vertices.}
\\
If $G$ is any graph on $n$ vertices, we find that $\hom(G,H)=k^n$ because any vertex of $G$ can be mapped to any vertex of $H$. Hence, as any graph on $n$ vertices with minimum degree $\delta$ maximises the number of $H$-colourings, we have $\hom(G,H)\leq\hom(K_{\delta,n-\delta},H)$ as required.

\item \textit{$H=K_{k,k}$.}
\\
$H$ is bipartite so $\hom(G,H)\neq0$ if and only if $G$ is bipartite. For any connected bipartite graph $G$ on $n$ vertices, $\hom(G,H)=2k^n$. This means that any connected bipartite graph on $n$ vertices with minimum degree $\delta$ maximises the number of $H$-colourings and hence  $\hom(G,H)\leq\hom(K_{\delta,n-\delta},H)$ as required.
\end{enumerate}
As the theorem is true in these two cases, we may assume that $H$ is not the complete looped graph on $k$ vertices or $K_{k,k}$. We may also assume that $k\geq2$ as we have already dealt with the cases when $H$ is a single looped vertex and when $H=K_{1,1}$. Hence we may apply Lemma \ref{lem:pathcounting} when required.

Let $G$ be a graph on $n$ vertices with minimum degree $\delta$ that has the maximum number of $H$-colourings. We know that $H$ has at least one vertex $v$ of degree $k$. When considering $H$-colourings of $K_{\delta,n-\delta}$, we can map the vertex class of size $\delta$ to $v$ and the other vertex class to the neighbours of $v$. Hence, $\hom(K_{\delta,n-\delta},H)\geq k^{n-\delta}$. 

We will proceed to determine the structure of $G$. The assumption that $G$ has most $H$-colourings tells us that $\hom(G,H)\geq\hom(K_{\delta,n-\delta},H)$. We will show that, for $n$ sufficiently large, we must have $G=K_{\delta,n-\delta}$.
\\
\\
\textit{Claim 1: $G$ has a bounded number of disjoint cycles.}\\
Suppose that $G$ has $d$ disjoint cycles. We colour $G$ in the following way. Pick any vertex of $G$ and map it to any vertex of $H$. Take a shortest path from the starting vertex to a vertex on one of the disjoint cycles. There are at most $k$ ways to map each vertex on this path to vertices of $H$. We then consider the other vertices on the cycle (as the end vertex of the path has already been mapped to a vertex of $H$). Lemma \ref{lem:pathcounting} gives at most $(k^2-1)k^{t-3}$ ways to map these vertices to $H$, where $t$ is the number of vertices in the cycle. We then repeat this process of finding a shortest path from the already mapped vertices to one of the disjoint cycles and mapping the vertices in the path and cycle to $H$. Once all of the vertices in disjoint cycles have been considered, any remaining vertices can be mapped greedily with at most $k$ choices for each by Proposition \ref{prop:colouring}. Therefore
\[
\hom(G,H)\leq|V(H)|(k^2-1)^dk^{n-2d-1}<|V(H)|k^{n-1}e^{-\frac{d}{k^2}}.
\]
This is strictly smaller than $k^{n-\delta}$ whenever $d>k^2\log|V(H)|+k^2(\delta-1)\log k$. As $\hom(G,H)$ is maximal, it follows that $G$ has bounded number of disjoint cycles. This bound only depends on $H$ and $\delta$. Hence we have proved the claim.
\\
\\
Applying Theorem \ref{thm:erdos-posa} to $G$, we find that there exists a constant $\alpha=\alpha(\delta,H)$ such that $G$ can be made acyclic by removing at most $\alpha$ vertices. We can therefore partition the vertices of $G$ into a set $A$ of size at most $\alpha$ and a set $F$ such that $G[F]$ is a forest.

We will show that we can make $F$ into an independent set by moving at most a constant number of vertices from $F$ to $A$. This constant depends only on $\delta$ and $H$ and not on the number of vertices in $G$.

We say that a component of a graph is \textit{non-trivial} if it contains at least one edge.
\\
\\
\textit{Claim 2: The forest $F$ has a bounded number of non-trivial components.}\\
Suppose $F$ has $a$ non-trivial components, $G_1,\dots G_a$. Each $G_i$ is a tree and so contains a maximal path $P_i$. As every vertex in $G$ has degree at least $\delta\geq3$, each end-vertex of $P_i$ must have a neighbour in $A$. We colour $G$ in the following way. First map $A$ into $H$. There are at most $|V(H)|^{|A|}$ ways to do this. We then consider each $G_i$ in turn. By Lemma 2.2, there are at most $(k^2-1)k^{|P_i|-2}$ ways to colour $P_i$ and at most $k$ ways to colour each of the other vertices of $G_i$. Finally, we consider the remaining vertices of $G$, each of which has at most $k$ possible choices by Proposition \ref{prop:colouring}. Hence
\[
\hom(G,H)\leq|V(H)|^{|A|}(k^2-1)^ak^{n-|A|-2a}<|V(H)|^{\alpha}k^{n-\alpha}e^{-\frac{a}{k^2}}.
\]
This is strictly less than $k^{n-\delta}$ whenever $a>k^2\alpha\log|V(H)|+k^2(\delta-\alpha)\log k$. The maximality of $\hom(G,H)$ means that there exists a constant depending only on $\delta$ and $H$ that bounds the number of non-trivial components of $F$ and hence proves the claim.
\\
\\
Let $T$ be any non-trivial component of $F$. Define $T'$ to be the subtree obtained from $T$ by deleting all of the leaves. We will show that the size of $T'$ is bounded by a constant that only depends on $\delta$ and $H$. This is done in two steps: first we show that the maximal length of a path in $T$ is bounded and then we show that $T'$ can only have a bounded number of leaves. Together, these two claims bound the size of $T'$.
\\
\\
\textit{Claim 3: The length of the longest path in $T$ is bounded.}\\
Suppose the longest path $P$ in $T$ is $u_1v_1u_2v_2\dots$ and has length $b$. We may write $b=2b'+r$ where $r\in\{0,1\}$. The minimum degree of $G$ is at least $\delta\geq3$ and $T$ is acyclic. Therefore, each vertex of $P$ has a neighbour which is not on $P$. Further, every leaf of $T$ must have a neighbour in $A$.

We colour the vertices of $G$ as follows. First, colour $A$. Next, we colour the vertices of $P$ using the following algorithm. Initially, $i=1$. The algorithm colours vertices $u_i$ and $v_i$ at step $i$ (and possibly some other vertices of $T$ that do not lie on $P$).

\begin{figure}[h]
\centering
\begin{tikzpicture}
\draw (-1,-0.5) ellipse (0.6cm and 2cm) node {$A$};
\node[main node, label={0:$u_1$}] (u) at (3,1.5) {};
\node[main node, label={0:$v_1$}] (v) at (3,0.5) {};
\node[main node, label={10:$P$}] (w) at (3,-2.5) {};
\path[draw,thick]
	(u) edge node {} (v)
	(v) edge node[below, rotate=90] {...} (w)
	(u) edge node {} (-1,1.1)
	(v) edge node {} (-1,0.5);
\draw[blue] (3,1) ellipse (0.7cm and 1cm);
\draw[red] (-1,-0.5) ellipse (0.8cm and 2.2cm);

\draw (7,-0.5) ellipse (0.6cm and 2cm) node {$A$};
\node[main node, label={0:$u_1$}] (u) at (11,1.5) {};
\node[main node, label={10:$v_1$}] (v) at (11,0.5) {};
\node[main node, label={10:$P$}] (w) at (11,-2.5){};
\node[main node, label={270:$Q_1$}] (x) at (9,-1.5) {};
\path[draw,thick]
	(u) edge node {} (v)
	(v) edge node[below, rotate=90] {...} (w)
	(u) edge node {} (7,1.1)
	(x) edge node {} (7,0.5);
\draw[decorate, decoration=snake] (v)--++(x);
\draw[red] (7,-0.5) ellipse (0.8cm and 2.2cm);
\draw [blue] plot [smooth cycle, tension=2] coordinates {(11,1.9) (11,-0.3) (8.6,-1.7)};

\begin{customlegend}[legend cell align=left,legend entries={vertices already coloured, vertices coloured in this step}, legend style={at={(5.5,-3.5)},anchor=north,font=\footnotesize}]
\addlegendimage{no markers,red}
\addlegendimage{no markers,blue}
\end{customlegend}
\end{tikzpicture}
\caption{On the left, $v_1$ has a neighbour in $A$; on the right, $v_1$ does not.}
\label{fig:longpathstart}
\end{figure}
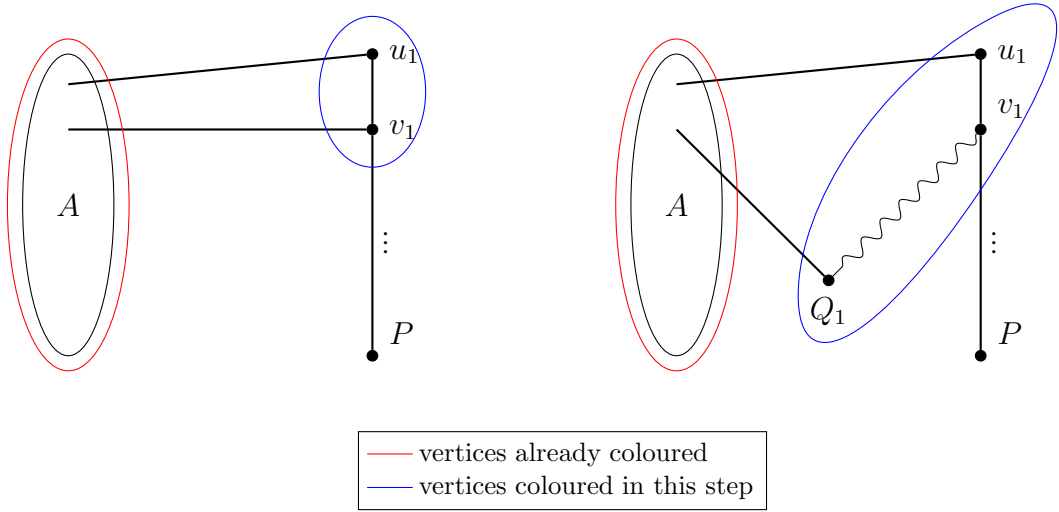

At the $i^\text{th}$ step, consider vertices $u_i$ and $v_i$ on $P$. If $i=1$, $u_i$ is an end-vertex of $P$ and so has a neighbour in $A$; if $i\neq1$, $u_i$ has $v_{i-1}$ as a neighbour. Hence, we know $u_i$ is adjacent to a vertex which has already been coloured. Consider the vertex $v_i$. If $v_i$ has a neighbour in A, we have a path of length 4 starting and ending at vertices which have already been coloured. Lemma \ref{lem:pathcounting} tells us there are at most $k^2-1$ choices for $u_i$ and $v_i$ (see Figure \ref{fig:longpathstart}). If $v_i$ does not have a neighbour in $A$, it must have another neighbour in $T$ which does not lie on $P$. Take a maximal path $Q_i$ in $T$, which starts at $v_i$ and avoids $P$. The end-vertex of $Q_i$ that is not $v_i$ must be a leaf in $T$ and hence has a neighbour in $A$ (see Figure \ref{fig:longpathstart}). We therefore have a path of length $|Q_i|+3$ which starts and ends with vertices that have already been coloured and has $u_i\cup Q_i$ as the internal vertices. Lemma 2.2 gives at most $(k^2-1)k^{|Q_i|-1}$ ways to colour the path $u_i\cup Q_i$. We then proceed to the $(i+1)^\text{th}$ step of the algorithm.

After $b'$ steps, we have coloured $2b'$ vertices of $P$ (and possibly some other vertices of $T$). We finish by colouring all of the remaining vertices of $G$, each of which has at most $k$ choices by Proposition \ref{prop:colouring}. Therefore
\[
\hom(G,H)\leq|V(H)|^{|A|}(k^2-1)^{b'}k^{n-|A|-2b'}<|V(H)|^{\alpha}k^{n-\alpha}e^{-\frac{b'}{k^2}}.
\]
This is strictly less than $k^{n-\delta}$ whenever $b'>k^2\alpha\log|V(H)|+k^2(\delta-\alpha)\log k$. Because $\hom(G,H)$ is maximal, there exists a constant depending only on $\delta$ and $H$ which bounds the length of a maximal path in any non-trivial component of $F$ as required.
\\
\\
\textit{Claim 4: $T'$ has a bounded number of leaves.}\\
Suppose $T'$ has $l$ leaves. Each leaf of $T'$ has at least two neighbours which are not in $T'$ because the minimum degree of $G$ is at least $\delta\geq3$. At least one of these neighbours is a leaf of $T$. Similarly, every leaf of $T$ has a neighbour in $A$.

We colour $G$ by first colouring the vertices of $A$. For each leaf $v$ of $T'$, there are two possibilities. If $v$ has two neighbours $u$ and $w$ which are leaves of $T$, there is a path of length 5 with end vertices in $A$ and internal vertices $u$, $v$ and $w$. By Lemma \ref{lem:pathcounting} there are at most $(k^2-1)k$ ways to colour the path $uvw$. If $v$ only has one neighbour $u$ which is a leaf of $T$, then $v$ must also have a neighbour in $A$ because it has at least $\delta$ neighbours and only one of these can be in $T'$ (see Figure \ref{fig:treeleaves}). Apply Lemma \ref{lem:pathcounting} to the path with end vertices in $A$ and internal vertices $u$ and $v$. There are at most $k^2-1$ choices for the colours of $u$ and $v$.

\begin{figure}[h]
\centering
\begin{tikzpicture}
\draw (-1.1,-0.5) ellipse (0.6cm and 2cm) node {$A$};
\node[main node,label={270:$u$}] (u) at (0.9,0.5) {};
\node[main node,label={90:$w$}] (w) at (0.9,-1.5) {};
\node[main node,label={180:$v$}] (v) at (1.9,-0.5) {};
\draw (2.7,-0.5) ellipse (1.4cm and 0.6cm) node {$T'$};
\path[draw,thick]
	(u) edge node {} (v)
	(w) edge node {} (v)
	(v) edge node {} (2.3,-0.5)
	(u) edge node {} (-1.1,0.1)
	(w) edge node {} (-1.1,-1.1);
\draw[red] (-1.1,-0.5) ellipse (0.8cm and 2.2cm);
\draw [blue] plot [smooth cycle, tension=2] coordinates {(0.7,0.7) (0.7,-1.7) (2.1,-0.5)};

\draw (6.9,-0.5) ellipse (0.6cm and 2cm) node {$A$};
\node[main node,label={90:$u$}] (u) at (8.9,-0.5) {};
\node[main node,label={90:$v$}] (v) at (9.9,-0.5) {};
\draw (10.7,-0.5) ellipse (1.4cm and 0.6cm) node {$T'$};
\path[draw,thick]
	(u) edge node {} (v)
	(v) edge node {} (10.3,-0.5)
	(u) edge node {} (6.9,0)
	(v) edge[bend left] node {} (6.9,-1);
\draw[red] (6.9,-0.5) ellipse (0.8cm and 2.2cm);
\draw[blue] (9.4,-0.5) circle (0.8cm);

\begin{customlegend}[legend cell align=left,legend entries={vertices already coloured, vertices coloured in this step}, legend style={at={(5.5,-3.5)},anchor=north,font=\footnotesize}]
\addlegendimage{no markers,red}
\addlegendimage{no markers,blue}
\end{customlegend}
\end{tikzpicture}
\caption{On the left, $v$ has two leaves as neighbours; on the right, $v$ has one.}
\label{fig:treeleaves}
\end{figure}
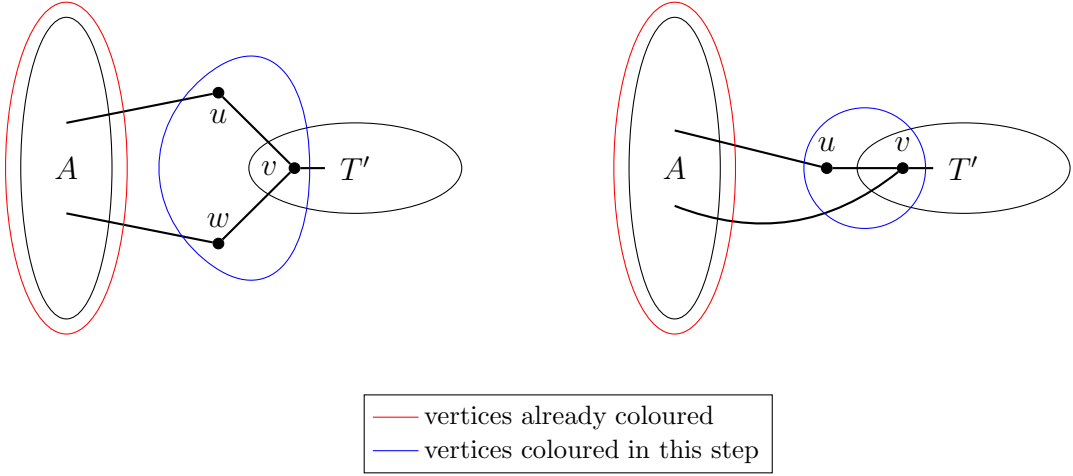

Once each leaf of $T'$ has been assigned to a vertex of $H$, there are at most $k$ choices for each of the remaining vertices of $G$ by Proposition \ref{prop:colouring}. Therefore
\[
\hom(G,H)\leq|V(H)|^{|A|}(k^2-1)^lk^{n-|A|-2l}<|V(H)|^{\alpha}k^{n-\alpha}e^{-\frac{l}{k^2}}.
\]
This is strictly less than $k^{n-\delta}$ whenever $l>k^2\alpha\log|V(H)|+k^2(\delta-\alpha)\log k$. The maximality of $\hom(G,H)$ means that the maximum number of leaves $T'$ can have is bounded above by a constant depending only on $\delta$ and $H$ as required.
\\
\\
Claims 3 and 4 show that, for each non-trivial component $T$ of $F$, the subtree $T'$ consisting of $T$ without its leaves has maximal size bounded by a constant $t(\delta,H)$. Claim 2 shows that there are at most $a(\delta,H)$ non-trivial components of $F$ for some constant $a(\delta,H)$.

We can make $F$ into an independent set by moving some (possibly all) of the vertices of each $T'$ from $F$ to $A$. If any non-trivial component has $T'=\emptyset$, then $T$ is a single edge and in this case we just move one of the end vertices from $F$ to $A$. Hence, by moving at most $a(\delta,H)t(\delta,H)$ vertices from $F$ to $A$, we can turn the forest into an independent set.

We have now partitioned the vertices of $G$ into sets of vertices $L$ and $R$ where $|L|\leq \alpha(\delta,H)+a(\delta,H)t(\delta,H)$ and $R$ is an independent set. The size of $L$ is bounded above by a constant that only depends on $\delta$ and $H$; it does not depend on the size of $G$.

Each vertex in $R$ has at least $\delta$ neighbours in $L$ because of the minimum degree of the vertices in $G$. By the pigeonhole principle, there exists a set $Y\subseteq L$ of size $\delta$ such that $Y$ is contained in the neighbourhood of at least $(n-|L|)/{{|L|}\choose{\delta}}\geq cn$ vertices of $R$ for some constant $c=c(\delta,H)$. Hence, $G$ contains the subgraph $K_{\delta,cn}$.

If $G$ does not contain $K_{\delta,n-\delta}$ as a subgraph, then $Y$ is not a dominating set for $G$. Therefore, the subgraph induced by $G\backslash Y$ has a non-trivial component. If $G\backslash Y$ contains a non-trivial tree, take a maximal path $X$ in this tree. Otherwise, choose $X$ to be a cycle together with a shortest path from the cycle to $Y$.

We may colour the vertices of $G$ in such a way that $Y$ is always coloured first. Recall the definition of $S(\delta,H)$ given at the beginning of Section \ref{sec:connectedgraph}.

If $Y$ is coloured using a vector from $S(\delta,H)$, we then colour the vertices of $X$. There are at most $(k^2-1)k^{|X|-2}$ ways do this. Finally, we colour the remaining vertices, each of which has at most $k$ choices by Proposition \ref{prop:colouring}. This gives at most $s(\delta,H)(k^2-1)k^{n-\delta-2}$ such colourings.

Alternatively, if $Y$ is not coloured using a vector from $S(\delta,H)$, then there are at most $k-1$ ways to map each of the other $cn$ vertices of the $K_{\delta,cn}$ subgraph into $H$. There are then at most $k$ choices for each of the remaining vertices of $G$ by Proposition \ref{prop:colouring}. There are at most $|V(H)|^\delta(k-1)^{cn}k^{n-\delta-cn}$ such colourings.

Combining the above gives
\begin{align*}
\hom(G,H)
&\leq s(\delta,H)(k^2-1)k^{n-\delta-2}+|V(H)|^\delta(k-1)^{cn}k^{n-cn-\delta}\\
&=s(\delta,H)k^{n-\delta}-s(\delta,H)k^{n-\delta-2}+|V(H)|^\delta(k-1)^{cn}k^{n-cn-\delta}\\
&<s(\delta,H)k^{n-\delta}
\end{align*}
for sufficiently large values of $n$.

If $G$ contains $K_{\delta,n-\delta}$ as a subgraph and $G\neq K_{\delta,n-\delta}$, then we know that $G$ contains at least one extra edge between two vertices in the same partition class. Clearly, every mapping of $G$ into $H$ is also a mapping of $K_{\delta,n-\delta}$ into $H$. We will show below that the converse is not true.

If $ij$ is an edge in $H$, then mapping the size $\delta$ partition class of $K_{\delta,n-\delta}$ to $i$ and the other partition class to $j$ is a proper mapping of $K_{\delta,n-\delta}$ into $H$. However, it is only a proper mapping of $G$ to $H$ if the partition class containing the extra edge is mapped to a looped vertex. Therefore, if $H$ has a non-looped vertex, $\hom(G,H)<\hom(K_{\delta,n-\delta})$.

Suppose every vertex of $H$ is looped. We assumed that $H$ was connected and not the complete looped graph so there will be non-adjacent vertices $j$ and $k$ which have a common neighbour $i$. We may map the partition class with the extra edge to vertices $j$ and $k$ and the other partition class to $i$. If the extra edge has one endpoint in $j$ and the other in $k$, we do not get a proper $H$-colouring of $G$ but it is a valid $H$-colouring of $K_{\delta,n-\delta}$. Hence $\hom(G,H)<\hom(K_{\delta,n-\delta})$.

Therefore, if $\hom(G,H)$ is maximal and $n$ is sufficiently large, then we must have $G=K_{\delta,n-\delta}$.
\end{proof}

\section{Counterexample to Conjecture \ref{conj:main}}
\label{sec:counterexample}

We write $T_t(x)$ for the \textit{$t$-partite Tur\'{a}n graph on $x$ vertices} (i.e. the complete $t$-partite graph on $x$ vertices with the vertex classes as equal as possible).

For every $\delta\geq3$, we will construct a graph $H$ such that, for infinitely many values of $n$, the number of $H$-colourings is uniquely maximised by a disjoint union of complete multipartite graphs. This shows that Conjecture \ref{conj:main} does not hold. For simplicity, we first assume that $(t-1)|\delta$ for some $3\leq t\leq\delta$.

\begin{theorem}
\label{thm:counter}
Fix $\delta\geq3$ and $3\leq t\leq\delta$ such that $\delta=(t-1)\alpha$ for some $\alpha\in\mathbb{N}$. Then there exists a constant $k_0(\delta)$ such that the following holds for all values of $m\in\mathbb{N}$: if $k\geq k_0(\delta)$ and $G$ is any graph on $n=mt\alpha$ vertices with minimum degree at least $\delta$, then we have $\hom(G,kK_t)\leq\hom(mT_t(t\alpha),kK_t)$ with equality if and only if $G=mT_t(t\alpha)$.
\end{theorem}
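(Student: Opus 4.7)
The plan is to exploit that $kK_t$ has $k$ isomorphic connected components, each equal to $K_t$. Since any homomorphism from a connected graph into $kK_t$ lands in one copy of $K_t$, and a homomorphism into $K_t$ is exactly a proper $t$-colouring, we obtain the factorisation
\[
\hom(G, kK_t) \;=\; k^{c(G)} \hom(G, K_t) \;=\; k^{c(G)} \prod_{C} P(C, t),
\]
where $c(G)$ is the number of components of $G$, the product runs over components $C$, and $P(C, t) = \hom(C, K_t)$ is the chromatic polynomial of $C$ at $t$. Applied to $G = mT_t(t\alpha)$, this gives $\hom(mT_t(t\alpha), kK_t) = (kt!)^m$, since $T_t(t\alpha) = K_{\alpha, \ldots, \alpha}$ has exactly $t!$ proper $t$-colourings. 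Hence the theorem reduces to a per-component inequality, and we may assume every component of $G$ is $t$-colourable (otherwise $\hom(G, kK_t) = 0$).

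I would first establish a size bound: any connected graph $C$ with $\chi(C) \leq t$ and minimum degree $\geq \delta = (t-1)\alpha$ satisfies $|V(C)| \geq t\alpha$, with equality iff $C = T_t(t\alpha)$. To prove this, fix a proper $t$-colouring of $C$ with classes $V_1, \ldots, V_t$; for each $v \in V_i$, all neighbours of $v$ lie outside $V_i$, so $(t-1)\alpha \leq d(v) \leq |V(C)| - |V_i|$, forcing $|V_i| \leq |V(C)| - (t-1)\alpha$. Summing over $i$ yields $|V(C)| \geq t\alpha$, and equality forces $|V_i| = \alpha$ and every vertex adjacent to every vertex outside its colour class, so $C = T_t(t\alpha)$.

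The heart of the argument is the per-component inequality
\[
k \cdot P(C, t) \;\leq\; (kt!)^{|V(C)|/(t\alpha)}
\]
for every connected $t$-colourable $C$ with minimum degree $\geq \delta$, once $k \geq k_0(\delta)$, with equality iff $C = T_t(t\alpha)$. For $|V(C)| = t\alpha$ the size lemma forces $C = T_t(t\alpha)$ and $P(C, t) = t!$, giving equality. For $|V(C)| > t\alpha$ I would invoke the trivial bound $P(C, t) \leq t(t-1)^{|V(C)| - 1}$ (spanning-tree argument); taking logarithms, the target inequality rearranges to
\[
\log k \;\geq\; \frac{t\alpha \log t + t\alpha(|V(C)| - 1)\log(t-1) - |V(C)| \log t!}{|V(C)| - t\alpha}.
\]
A short calculus check shows the right-hand side is a decreasing function of $|V(C)|$ for $|V(C)| \geq t\alpha + 1$ (this uses only $t \geq 3$ and $\alpha \geq 2$, which always holds since $\delta \geq 3$, $t \geq 3$ and $(t-1) \mid \delta$ force $\alpha \geq 2$), so its supremum is attained at $|V(C)| = t\alpha + 1$ and depends only on $t$ and $\alpha$. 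Defining $k_0(\delta)$ to exceed the exponential of this supremum gives the inequality uniformly in $|V(C)|$. Strictness for $C \neq T_t(t\alpha)$ comes from the fact that $C$ has minimum degree $\geq 3$, so is not a tree, whence the spanning-tree bound is strict.

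Taking the product over components then gives
\[
\hom(G, kK_t) \;=\; \prod_{i} k \cdot P(C_i, t) \;\leq\; \prod_{i} (kt!)^{|V(C_i)|/(t\alpha)} \;=\; (kt!)^{n/(t\alpha)} \;=\; (kt!)^m,
\]
with equality iff every $C_i = T_t(t\alpha)$, i.e.\ $G = mT_t(t\alpha)$. The principal obstacle is the per-component inequality for large $|V(C)|$: the trivial chromatic-polynomial bound is very wasteful for graphs of minimum degree $\geq \delta$, but when combined with the size lemma it is just strong enough to give a finite threshold $k_0(\delta)$, controlled by the single worst case $|V(C)| = t\alpha + 1$. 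Any sharper bound on $P(C, t)$ (e.g.\ via entropy or a Galvin--Tetali-style estimate tailored to $t$-partite graphs of high minimum degree) would reduce $k_0(\delta)$ but is not needed for the existence statement.
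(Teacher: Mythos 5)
Your proof is correct and follows essentially the same route as the paper's: decompose into components, show every admissible component has at least $t\alpha$ vertices with $T_t(t\alpha)$ as the unique extremal case, bound larger components by the greedy estimate $kt(t-1)^{N-1}$, and take $k$ large enough that this is beaten by $(t!k)^{N/(t\alpha)}$. The only differences are presentational: you prove the size bound directly from a $t$-colouring rather than citing Tur\'an's theorem, and you locate the worst case at $N=t\alpha+1$ by a monotonicity argument where the paper uses the single condition $(t!k)^{1/(t\alpha)}>tk^{1/(t\alpha+1)}$.
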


\begin{proof}
Fix $\delta\geq3$ and $3\leq t\leq\delta$ as above where $\delta=(t-1)\alpha$. Take $k$ sufficiently large that $(t!k)^{1/(t\alpha)}>tk^{1/(t\alpha+1)}$.

Clearly, $\hom(K_{t+1},kK_t)=0$ and so we only need to consider graphs which are $K_{t+1}$-free.

Any $K_{t+1}$-free graph with minimum degree at least $\delta$ has at least $t\alpha$ vertices. Tur\'{a}n's theorem tells us that $T_t(t\alpha)$ is the only such graph with exactly $t\alpha$ vertices. It is easy to see that $\hom(T_t(t\alpha),kK_t)=t!k$.

Let $m\in\mathbb{N}$ and take $G$ to be any graph on $n=mt\alpha$ vertices with minimum degree at least $\delta$. We may assume that $G$ has $a$ components $G_1,\dots G_a$ with $|G_1|\geq\dots\geq|G_a|\geq t\alpha$. Then $\hom(G,kK_t)=\prod_{i=1}^a\hom(G_i,kK_t)$. If $|G_1|=t\alpha$, then $|G_i|=t\alpha$ for all $i$ and hence $G=mT_t(t\alpha)$.

Suppose that $|G_1|>t\alpha$. We know that, if $|G_i|=t\alpha$, then $G_i=T_t(t\alpha)$ and $\hom(G_i,kKt)=t!k$. If $|G_i|>t\alpha$, then we may colour the vertices of $G_i$ greedily to get $\hom(G_i,kK_t)\leq tk(t-1)^{|G_i|-1}<kt^{|G_i|}$. We chose $k$ such that $(t!k)^{1/(t\alpha)}>tk^{1/(t\alpha+1)}$. Using this and the fact that $|G_i|\geq t\alpha+1$, we have $\hom(G_i,kK_t)<(t!k)^{|G_i|/(t\alpha)}$. Combining these two observations, we get
\[
\hom(G,kK_t)=\prod_{i=1}^a\hom(G_i,kK_t)<(t!k)^{n/(t\alpha)}=(t!k)^m=\hom(mT_t(t\alpha),kK_t).
\]
Therefore, if $G$ is any graph on $n=mt\alpha$ vertices with minimum degree at least $\delta$, we have $\hom(G,kK_t)\leq\hom(mT_t(t\alpha),kK_t)$. We have equality if and only if $G=mT_t(t\alpha)$.
\end{proof}
\noindent
We may use the techniques above to show that, if $(t-1)|(\delta +1)$, then a similar result holds -- there is a graph $H$ such that the number of $H$-colourings is uniquely maximised by a union of complete $t$-partite graphs. Therefore, for every $\delta\geq3$, by taking $t=3$, we can produce a counterexample to Conjecture \ref{conj:main}.

In all of the examples we have seen so far, the number of $H$-colourings has been maximised by the union of complete multipartite graphs. We will now give an example where this is not the case.

Take $\delta=7$ and $t=4$ and choose $k$ as in Theorem \ref{thm:counter}. Let $H=kK_4$, $m\in\mathbb{N}$ and take $G$ to be any graph on $n=10m$ vertices with minimum degree at least $7$. As before, we may assume that $G$ is $4$-colourable. If $G$ has a component with at least $11$ vertices, then we can show, in a similar way to Theorem \ref{thm:counter}, that $\hom(G,kK_4)<\hom(mT_4(10),kK_4)$. Any union of complete multipartite graphs except $mT_4(10)$ is either not 4-colourable or contains a component with at least 11 vertices. Therefore, $mT_4(10)$ maximises the number of $H$-colourings among unions of complete multipartite graphs. However, the number of $H$-colourings is not maximised overall by $mT_4(10)$. Let $T'$ be the graph formed from $T_4(10)$ by removing a perfect matching between the two vertex classes of size $2$. Then $\hom(mT',kK_4)=2\hom(mT_4(10),kK_4)$.

\section{Proof of Theorem \ref{thm:largedelta}}
\label{sec:largedelta}
We will need the following simple observation.

\begin{proposition}
\label{prop:disjointcycles}
Fix $d\in\mathbb{N}$. Let $G$ be any graph with minimum degree at least $3d$. Then $G$ has at least $d$ disjoint cycles.
\end{proposition}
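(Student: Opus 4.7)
The plan is to prove the proposition by induction on $d$. The base case $d=1$ is immediate, since minimum degree at least $3$ (in fact, at least $2$) forces a cycle: follow a trail of distinct edges until a vertex is revisited. For the inductive step, assume the claim holds for $d-1$ and let $G$ have $\delta(G) \geq 3d$. I would take $C$ to be a shortest cycle in $G$ and show that $\delta(G - V(C)) \geq 3(d-1)$; the inductive hypothesis then supplies $d-1$ pairwise disjoint cycles in $G - V(C)$, which together with $C$ give the required $d$ disjoint cycles in $G$.

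The key ingredient is a girth-based bound: no vertex outside $C$ has more than three neighbours on $C$. Write $g = |V(C)|$, the girth of $G$, and suppose $v \notin V(C)$ has $k \geq 2$ neighbours $u_1,\dots,u_k$ in cyclic order on $C$. Let $\ell_i$ be the length of the arc of $C$ from $u_i$ to $u_{i+1}$ (indices mod $k$), so that $\sum_i \ell_i = g$. Joining this arc to the edges $vu_i$ and $vu_{i+1}$ produces a cycle of length $\ell_i + 2$, so minimality of $g$ forces $\ell_i \geq g-2$. Summing gives $g \geq k(g-2)$, which yields $k \leq 3$ (with equality only when $g=3$). Hence each vertex of $G - V(C)$ loses at most three neighbours when $V(C)$ is deleted, so $\delta(G - V(C)) \geq 3d - 3 = 3(d-1)$, as needed.

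I do not anticipate a serious obstacle; the crux is simply the girth trick above. The one piece of bookkeeping is to ensure $G - V(C)$ is non-empty so the induction hypothesis applies meaningfully, but this is automatic: $G = C$ would force $\delta(G) = 2 < 3d$ for every $d \geq 1$.
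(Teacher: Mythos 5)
Your proof is correct and takes essentially the same route as the paper's: induction on $d$, deleting a shortest cycle $C$ and observing that each remaining vertex has at most three neighbours on $C$, so the minimum degree drops by at most $3$. The only difference is that you spell out the girth computation ($g \geq k(g-2)$, hence $k \leq 3$) that the paper leaves as a one-line remark.
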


\begin{proof}
If $d=1$, the minimum degree of $G$ is at least 3 and so $G$ contains a cycle. If $d>1$, take $C$ to be a shortest cycle in $G$. Each vertex in $G$ has at most 3 neighbours on $C$ or else we would be able to find a shorter cycle. Removing the vertices in $C$ reduces the minimum degree by at most 3. Therefore, by induction, we can find at least $d-1$ disjoint cycles in $G\backslash V(C)$.
\end{proof}
\noindent
Before proving Theorem \ref{thm:largedelta}, we will prove a couple of useful lemmas. Recall the definitions of $S(\delta,H)$ and $s(\delta,H)$ given at the start of Section \ref{sec:connectedgraph}.

\begin{lemma}
\label{lem:boundlargebipartite}
Fix $\delta\geq1$ and $k\geq2$. Fix $H$ to be any graph with maximum degree $k$. Then there exists a constant $\beta(\delta,H)$ such that, for $n\geq\beta(\delta,H)$, we have $\hom(K_{\delta,n-\delta},H)\leq s(\delta,H)k^{n+1-\delta}$.
\end{lemma}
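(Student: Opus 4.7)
My plan is to decompose $\hom(K_{\delta,n-\delta},H)$ according to the image of the smaller part. Write $A$ for the part of size $\delta$ and $B$ for the part of size $n-\delta$. A homomorphism $\psi\colon K_{\delta,n-\delta}\to H$ is determined by choosing an ordered image $(v_1,\dots,v_\delta)\in V(H)^\delta$ for the vertices of $A$ and then, independently for each $b\in B$, a common neighbour of $v_1,\dots,v_\delta$ in $H$. Letting $c(v_1,\dots,v_\delta)$ denote the number of common neighbours of the tuple, this gives
\[
\hom(K_{\delta,n-\delta},H) \;=\; \sum_{(v_1,\dots,v_\delta)\in V(H)^\delta} c(v_1,\dots,v_\delta)^{n-\delta}.
\]

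Next I would split the sum according to whether the tuple lies in $S(\delta,H)$. Since $\Delta(H)=k$, every tuple satisfies $c\leq k$, and $S(\delta,H)$ is by definition exactly the set of tuples for which $c=k$; all remaining tuples satisfy $c\leq k-1$. Therefore
\[
\hom(K_{\delta,n-\delta},H) \;\leq\; s(\delta,H)\,k^{n-\delta} \;+\; |V(H)|^{\delta}(k-1)^{n-\delta}.
\]

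To finish, it suffices to bound the right-hand side by $s(\delta,H)\,k^{n+1-\delta}=k\cdot s(\delta,H)\,k^{n-\delta}$, which reduces to checking that $|V(H)|^{\delta}(k-1)^{n-\delta} \leq (k-1)\,s(\delta,H)\,k^{n-\delta}$. Using the bound $s(\delta,H)\geq 1$ noted in the definition, this follows from $|V(H)|^{\delta} \leq (k-1)\bigl(k/(k-1)\bigr)^{n-\delta}$. Since $k\geq 2$, the factor $\bigl(k/(k-1)\bigr)^{n-\delta}$ grows unboundedly in $n$ (for $k=2$ it is exactly $2^{n-\delta}$), so this inequality holds once $n$ exceeds an explicit threshold $\beta(\delta,H)$ depending only on $k$, $\delta$ and $|V(H)|$. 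The proof is essentially a one-line estimate once the decomposition is in place, so I do not anticipate any real obstacle; the content of the lemma is simply that the $S(\delta,H)$-tuples already produce the dominant $s(\delta,H)\,k^{n-\delta}$ term and the remaining tuples contribute only a lower-order exponential.
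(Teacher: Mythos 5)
Your proposal is correct and follows essentially the same route as the paper: both split the colourings of the size-$\delta$ class according to whether the image tuple lies in $S(\delta,H)$, obtain the bound $s(\delta,H)k^{n-\delta}+|V(H)|^{\delta}(k-1)^{n-\delta}$, and absorb the second term for large $n$ using $s(\delta,H)\geq1$ and $k\geq2$. The only differences are cosmetic (you write the count as an exact sum over tuples and absorb the error term into a factor of $k-1$ rather than $1$), so there is nothing to add.
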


\begin{proof}
The graph $K_{\delta,n-\delta}$ has two vertex classes. Denote the class of size $\delta$ by $Z$. When we are counting the number of $H$-colourings of $K_{\delta,n-\delta}$, we will colour vertices in $Z$ first and then the remaining vertices may be coloured greedily. There are two possibilities: either $Z$ is coloured so that all of the vertices used in $H$ have $k$ common neighbours (i.e. we use a vector from $S(\delta,H)$) or the vertices in $H$ used to colour $Z$ have strictly fewer than $k$ neighbours in common.

First, we consider the case where $Z$ is coloured using a vector from $S(\delta,H)$. When we come to colour the vertices of $G\backslash Z$, there are exactly $k$ choices for each one. Therefore, there are exactly $s(\delta,H)k^{n-\delta}$ such colourings.

Next, we consider the case where $Z$ is coloured so that the vertices used do not have $k$ common neighbours in $H$. This leaves at most $k-1$ ways to map the vertices of $G\backslash Z$ into $H$. Hence, there are at most $|V(H)|^{\delta}(k-1)^{n-\delta}$ such colourings.

Combining the above gives
\[
\hom(K_{\delta,n-\delta},H)\leq s(\delta,H)k^{n-\delta}+|V(H)|^{\delta}(k-1)^{n-\delta}.
\]
Hence, for $n$ sufficiently large, we have
\begin{align*}
\hom(K_{\delta,n-\delta},H)&\leq s(\delta,H)k^{n-\delta}+k^{n-\delta}\\
&\leq s(\delta,H)k^{n+1-\delta}.
\end{align*}
This proves the required result.
\end{proof}

\begin{lemma}
\label{lem:boundlargedelta}
Fix $H$ to be any graph with maximum degree $k\in\mathbb{N}$ that does not have the complete looped graph on $k$ vertices or $K_{k,k}$ as a component. There exists a constant $\delta_0(H)$ such that, if $\delta\geq\delta_0(h)$ and $G$ is a connected graph on $n$ vertices with minimum degree $\delta$, then $\hom(G,H)<k^{n-1}$.
\end{lemma}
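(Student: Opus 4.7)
The plan is to split $\hom(G,H)$ component by component and bound each piece either by greedy colouring (when the component has maximum degree strictly below $k$) or by exploiting disjoint cycles in $G$ (when the component has maximum degree exactly $k$). Since $G$ is connected, every $H$-colouring of $G$ maps $G$ entirely into one component of $H$, so
\[
\hom(G,H)=\sum_{H'}\hom(G,H'),
\]
the sum taken over components $H'$ of $H$. Note first that the hypothesis forces $k\geq 2$: if $k=1$, every component of $H$ avoids the complete looped graph on $1$ vertex and $K_{1,1}$, so every component has maximum degree $0$, contradicting $\Delta(H)=1$.

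For a component $H'$ with $\Delta(H')<k$, Proposition \ref{prop:colouring} applied after fixing a starting vertex in $H'$ yields the greedy bound $\hom(G,H')\leq|V(H')|(k-1)^{n-1}$. For a component $H'$ with $\Delta(H')=k$, the hypothesis guarantees that $H'$ is neither the complete looped graph on $k$ vertices nor $K_{k,k}$, so Lemma \ref{lem:pathcounting} applies to $H'$. By Proposition \ref{prop:disjointcycles}, whenever $\delta\geq 3d$ the graph $G$ contains $d$ vertex-disjoint cycles, and then the argument of Claim 1 in the proof of Theorem \ref{thm:connectedgraph} (pick a starting vertex, traverse a shortest path to each unused disjoint cycle, colour the cycle via Lemma \ref{lem:pathcounting}, then greedy-colour the remaining vertices with Proposition \ref{prop:colouring}) gives
\[
\hom(G,H')\leq|V(H')|(k^2-1)^d k^{n-2d-1}<|V(H')|\,k^{n-1}e^{-d/k^2}.
\]

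Letting $c$ be the number of components of $H$ and summing yields
\[
\hom(G,H)<c|V(H)|\,k^{n-1}e^{-d/k^2}+c|V(H)|(k-1)^{n-1}.
\]
I would then choose $d=\lceil k^2\log(4c|V(H)|)\rceil$ so that the first summand drops below $\frac{1}{4}k^{n-1}$; by Proposition \ref{prop:disjointcycles} this forces us to ask $\delta\geq 3d$. Using $n\geq\delta+1$ and $\log\!\bigl(k/(k-1)\bigr)\geq 1/k$, the second summand falls below $\frac{1}{4}k^{n-1}$ as soon as $\delta\geq k\log(4c|V(H)|)$. Setting $\delta_0(H)$ to be the maximum of these two thresholds finishes the proof.

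The main obstacle is just checking that Lemma \ref{lem:pathcounting} really applies component-wise: this uses the hypothesis that no component of $H$ is the complete looped graph on $k$ vertices or $K_{k,k}$, together with the observation that any component $H'$ with $\Delta(H')=k$ automatically has maximum degree $k$ as a graph in its own right, so the lemma may be invoked with $H'$ playing the role of $H$. Everything else is routine constant-chasing in terms of $k$, $|V(H)|$, and the number of components of $H$.
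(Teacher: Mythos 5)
Your proposal is correct and follows essentially the same route as the paper: decompose $\hom(G,H)$ over the components of $H$, bound the full-degree components via Proposition \ref{prop:disjointcycles} together with the disjoint-cycles/path-counting argument of Claim 1 of Theorem \ref{thm:connectedgraph}, and bound the remaining components by a $(k-1)^{n-1}$-type estimate using $n\geq\delta+1$. The only (harmless) difference is organisational: you split components by whether $\Delta(H')<k$ or $\Delta(H')=k$ and handle all sub-maximal components with one greedy bound, whereas the paper separates out the complete looped graphs on $l<k$ vertices and the $K_{l,l}$ with $l<k$ as explicit special cases and runs the cycle argument on everything else.
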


\begin{proof}
The minimum degree condition on $G$ ensures that $n\geq\delta+1$. The restrictions on $H$ mean that $k\geq2$.

Let $H$ have $h$ components $H_1,\dots H_h$. As $G$ is connected, any $H$-colouring of $G$ maps $G$ to a single component $H_i$ and so $\hom(G,H)=\sum_{i=1}^h\hom(G,H_i)$. We therefore first count the number of $H_i$-colourings of $G$ for each $i\in[h]$. There are three cases to consider.
\\
\\
\textit{Case 1.} Let $H_i$ be a complete looped graph on $l$ vertices where $l<k$. Then $\hom(G,H_i)=l^n\leq(k-1)^n$. This is strictly less than $k^{n-h-1}$ whenever $n>\frac{(h+1)\log k}{\log k-\log(k-1)}$.
\\
\\
\textit{Case 2.} Let $H_i=K_{l,l}$ where $l<k$. Then $\hom(G,H_i)=2l^n\leq2(k-1)^n$. This is strictly less than $k^{n-h-1}$ whenever $n>\frac{\log2+(h+1)\log k}{\log k-\log(k-1)}$. 
\\
\\
\textit{Case 3.} Let $H_i$ be any connected graph which is not the complete looped graph on $l$ vertices or $K_{l,l}$ for some $l\leq k$. Suppose $G$ has $d$ vertex disjoint cycles $C_1,\dots C_d$. We colour $G$ in the following way:
\begin{enumerate}
\item Pick any vertex of $G$ and map it to any vertex of $H_i$.
\item Find a shortest path $P$ from the already coloured vertices of $G$ to an uncoloured vertex on one of the cycles $C_j$. There are at most $k$ ways to map each vertex on this path to vertices of $H_i$.
\item The end vertex of $P$ has already been mapped to a vertex of $H_i$ so we consider the other vertices on the cycle $C_j$. Lemma \ref{lem:pathcounting} gives at most $(k^2-1)k^{|C_j|-3}$ ways to map these vertices to $H_i$.
\item If, for some $j'\in\{1,\dots d\}$, the cycle $C_{j'}$ has not yet been coloured, go back to step 2.
\item Colour any remaining uncoloured vertices in a greedy fashion. By Proposition \ref{prop:colouring}, there are at most $k$ choices for each vertex.
\end{enumerate}
By colouring $G$ in this way, we find that
\[
\hom(G,H_i)\leq|V(H_i)|(k^2-1)^dk^{n-2d-1}<|V(H_i)|k^{n-1}e^{-\frac d{k^2}}.
\]
This is strictly less than $k^{n-h-1}$ whenever $d>k^2\log|V(H_i)|+k^2h\log k$.
\\
\\
Choose $\delta\geq\max\big\{3k^2\log|V(H)|+3k^2h\log k,\frac{(h+1)\log k}{\log k-\log(k-1)}\big\}$ and note that $n\geq\delta+1$. If $H_i$ is in either Case 1 or Case 2, then $n$ is large enough that $\hom(G,H_i)<k^{n-h-1}$. If $H_i$ is in Case 3, then, by Proposition \ref{prop:disjointcycles}, we have that the number of disjoint cycles in $G$ is at least $k^2\log|V(H)|+k^2h\log k$ and hence $\hom(G,H_i)<k^{n-h-1}$. Then
\[
\hom(G,H)=\sum_{i=1}^{h}\hom(G,H_i)<hk^{n-h-1}<k^{n-1}.
\]
Hence, if $H$ does not contain the complete looped graph on $k$ vertices or $K_{k,k}$ as a component, we have $\hom(G,H)<k^{n-1}$ for $\delta$ sufficiently large as required. 
\end{proof}
\noindent
We are now ready to prove the main result.

\begin{proof}[Proof of Theorem \ref{thm:largedelta}]
Let $H$ be any graph with maximum degree $k$ that does not have the complete looped graph on $k$ vertices or $K_{k,k}$ as a component. This allows us to apply Lemma \ref{lem:boundlargedelta} as required.

Choose $\delta\geq\delta_0(H)$ where $\delta_0(H)$ is the constant found in Lemma \ref{lem:boundlargedelta}. Set $\lambda(\delta,H)=\max\{\kappa(\delta,H), \beta(\delta,H)\}$ where $\kappa(\delta,H)$ is the constant found in Theorem \ref{thm:connectedgraph} and $\beta(\delta,H)$ is the constant found in Lemma \ref{lem:boundlargebipartite}. Now, choose $n>(\delta-1)(\lambda(\delta,H)-1)$.

Let $G$ be a graph on $n$ vertices with minimum degree $\delta$ that has the maximum number of $H$-colourings. Clearly, $\hom(G,H)\geq\hom(K_{\delta,n-\delta},H)\geq s(\delta,H)k^{n-\delta}\geq k^{n-\delta}$.

Let $G$ have $t$ components $G_1,\dots G_t$. An $H$-colouring of $G$ comprises of separate $H$-colourings of each component $G_i$ and therefore $\hom(G,H)=\prod_{i=1}^t\hom(G_i,H)$. As $G$ has the most $H$-colourings among all graphs on $n$ vertices with minimum degree $\delta$, we must also have that $G_i$ has the most $H$-colourings among all graphs on $|G_i|$ vertices with minimum degree $\delta$ for each $i\in\{1,\dots t\}$.
\\
\\
\noindent
\textit{Claim 1: $G$ has a bounded number of components.}\\
By Lemma \ref{lem:boundlargedelta}, we have that $\hom(G_i,H)<k^{|G_i|-1}$ for each $i\in\{1,\dots t\}$ so
\[
\hom(G,H)=\prod_{i=1}^t\hom(G_i,H)<\prod_{i=1}^tk^{|G_i|-1}=k^{n-t}.
\]
If $t\geq\delta$, then we have $\hom(G,H)<k^{n-\delta}\leq\hom(K_{\delta,n-\delta},H)$ and this contradicts our assumption that $G$ has the maximum number of $H$-colourings.
\\
\\
\noindent
Hence we know that $G$ has at most $\delta-1$ components. By the pigeonhole principle, there is a component of $G$ with at least $\lambda(\delta,H)$ vertices. Without loss of generality, we may assume this component is $G_1$. By Theorem \ref{thm:connectedgraph}, we have that $G_1=K_{\delta,|G_1|-\delta}$ and, applying Lemma \ref{lem:boundlargebipartite}, we find that $\hom(G_1,H)\leq s(\delta,H)k^{|G_1|+1-\delta}$.
\\
\\
\noindent
\textit{Claim 2: $G$ has exactly one component.}\\
Suppose $t>1$. We know $\hom(G_1,H)\leq s(\delta,H)k^{|G_1|+1-\delta}$. By Lemma \ref{lem:boundlargedelta}, we have $\hom(G_2,H)<k^{|G_2|-1}$. Hence
\begin{align*}
\hom(G_1\cup G_2,H)&<s(\delta,H)k^{|G_1|+1-\delta}k^{|G_2|-1}\\
&=s(\delta,H)k^{|G_1|+|G_2|-\delta}\\
&\leq\hom(K_{\delta,|G_1|+|G_2|-\delta},H).
\end{align*}
Replacing $G_1\cup G_2$ by $K_{\delta,|G_1|+|G_2|-\delta}$ increases the number of $H$-colourings of $G$, which contradicts our assumption that $G$ has the maximum number of $H$-colourings.
\\
\\
\noindent
We have seen that $G$ has exactly one component $G_1$ and that this component is $K_{\delta,|G_1|-\delta}$. In other words, if $G$ has the maximum number of $H$-colourings, then $G=K_{\delta,n-\delta}$ as required.
\end{proof}

\section{Conclusion}
\label{sec:conclusion}
We have shown that, given any graph $H$ and any $\delta\geq3$, for sufficiently large $n$, the graph $G=K_{\delta,n-\delta}$ maximises $\hom(G,H)$ among all connected graphs on $n$ vertices with minimum degree $\delta$. If $H$ has a component which is neither a complete looped graph nor a complete balanced bipartite graph, then $K_{\delta,n-\delta}$ is the unique such maximising graph.

We have also considered the more general question which was asked by Engbers \cite{E17}: what happens if we consider all graphs on $n$ vertices with minimum degree $\delta$, rather than just those which are connected? We will look at the case where $H$ is fixed and $\delta\geq\delta_0(H)$. By making $\delta$ sufficiently large in relation to $|H|$, we are able to identify the maximising graph for certain graphs $H$.

In what follows, we take $G$ to be any graph on $n$ vertices with minimum degree $\delta$. We assume that $G$ has $t$ components $G_1,\dots G_t$.

If $H$ is fixed with maximum degree $k$ and $\delta$ is sufficiently large, then the graph which maximises the number of $H$-colourings depends on the structure of $H$. Some of the different possible graphs which maximise $\hom(G,H)$ are given below.

\begin{enumerate}
\item \textit{$H$ is $h$ disjoint copies of the complete looped graph on $k$ vertices.}\\
It is easy to see that $\hom(G,H)=\prod_{i=1}^t|V(H)|k^{|G_i|-1}=h^tk^n$. When $h=1$, $\hom(G,H)=k^n$ for any graph $G$ on $n$ vertices and so every graph $G$ maximises the number of $H$-colourings. When $h>1$, $\hom(G,H)$ is maximised when $G$ has as many components as possible. The minimum number of vertices in a component of $G$ is $\delta+1$ which occurs when the component is $K_{\delta+1}$. Writing $n=a(\delta+1)+b$ where $b\in\{0,\dots\delta\}$, we have that $\hom(G,H)$ is maximised by any graph with $a$ components, e.g. $(a-1)K_{\delta+1}\cup K_{\delta+b+1}$.

\item \textit{$H$ is $h$ disjoint copies of $K_{k,k}$.}\\
It is easy to see that, if a graph is not bipartite, it is not possible to map it into $H$. Therefore
\[
\hom(G,H)=
\begin{cases}
\prod_{i=1}^t\hom(G_i,H)=(2h)^tk^n&\text{if $G_i$ is bipartite}\\
0&\text{if $G_i$ is not bipartite.}
\end{cases}
\]
Clearly, the number of $H$-colourings is maximised when $G$ is bipartite and has as many components as possible. The smallest possible bipartite component of $G$ is $K_{\delta,\delta}$ which has $2\delta$ vertices. Writing $n=2a\delta+b$ where $b\in\{0,\dots2\delta-1\}$, we have that $\hom(G,H)$ is maximised by any bipartite graph with $a$ components, e.g. $(a-1)K_{\delta,\delta}\cup K_{\delta,\delta+b}$.

\item \textit{No component of $H$ is the complete looped graph on $k$ vertices or $K_{k,k}$.}\\
In Section \ref{sec:largedelta}, we showed that, for any $\delta\geq\delta_0(H)$, there exists a constant $n_0(\delta,H)$ such that, if $n\geq n_0(\delta,H)$, then $K_{\delta,n-\delta}$ uniquely maximises the number of $H$-colourings.
\end{enumerate}
\noindent
From the examples given above, it is clear to see that there is not a simple answer to the question of which graph $G$ maximises $\hom(G,H)$ when $H$ is fixed and $\delta$ is sufficiently large. We make the following conjecture.

\begin{conjecture}
For any graph $H$ and any $\delta\geq\delta_0(H)$, there exists a constant $n_0(\delta,H)$ such that the following holds: if $G$ is a graph with minimum degree $\delta$ and at least $n_0(\delta,H)$ vertices, then
\[
\hom(G,H)\leq\max\big\{\hom(K_{\delta+1},H)^{\frac{|G|}{\delta+1}},\hom(K_{\delta,\delta},H)^{\frac{|G|}{2\delta}},\hom(K_{\delta,|G|-\delta},H)\big\}.
\]
\end{conjecture}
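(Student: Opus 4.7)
The plan is to extend the component-decomposition strategy of Theorem \ref{thm:largedelta} from the single-maximiser setting to the full three-way maximum. Write $G = G_1 \cup \cdots \cup G_t$ with each $|G_i| \geq \delta + 1$, so $\hom(G,H) = \prod_i \hom(G_i, H)$. My goal is to prove the per-component inequality
\[
\hom(G_i, H) \leq \max\bigl\{ A^{|G_i|},\ B^{|G_i|},\ \hom(K_{\delta, |G_i|-\delta}, H) \bigr\},
\]
where $A := \hom(K_{\delta+1}, H)^{1/(\delta+1)}$ and $B := \hom(K_{\delta,\delta}, H)^{1/(2\delta)}$, and then collapse the product over $i$ down to one of the three pure candidates.

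For the per-component bound in the regime $|G_i| \geq \kappa(\delta, H)$, Corollary \ref{cor:connected} supplies the third term of the max directly. For $|G_i| < \kappa(\delta, H)$ there are only finitely many possible $G_i$ (once $\delta$ is fixed), and I would use the hypothesis $\delta \geq \delta_0(H)$ to argue that the small-component extremisers must behave like $K_{\delta+1}$ or $K_{\delta,\delta}$, matching $A$ or $B$. The trichotomy from the Conclusion — components of $H$ that are complete-looped, that are $K_{k,k}$, or that fall into the generic class — would dictate which of $A$ or $B$ is relevant.

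To collapse the product, partition the components into three sets $I_A, I_B, I_C$ of total sizes $n_A, n_B, n_C = n$ according to which term of the maximum attains each per-component bound. The $I_A$ and $I_B$ contributions multiply to $A^{n_A}$ and $B^{n_B}$ respectively. For $I_C$ I would use the merging inequality
\[
\hom(K_{\delta, a-\delta}, H) \cdot \hom(K_{\delta, b-\delta}, H) \leq \hom(K_{\delta, a+b-\delta}, H),
\]
valid in the regime $\max(A,B) \leq k$ (so that the bipartite side dominates), and following from Lemma \ref{lem:boundlargebipartite} together with the matching lower bound $\hom(K_{\delta, m-\delta}, H) \geq s(\delta,H)\, k^{m-\delta}$ obtained by colouring the small class with a vector from $S(\delta, H)$. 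Iterating merges $I_C$ into a single $\hom(K_{\delta, n_C-\delta}, H)$, and a final log-linearity step in $(n_A, n_B, n_C)$ forces the maximum of $A^{n_A} B^{n_B} \hom(K_{\delta, n_C-\delta}, H)$ on the simplex $n_A + n_B + n_C = n$ to occur at a vertex, recovering one of the three pure strategies.

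The main obstacle is the interaction between the small-component analysis and the case in which the merging inequality fails, namely when $\max(A, B) > k$ (e.g.\ for $H = K_{k,k}$). In that regime merging is actively harmful and one must instead split large $K_{\delta, \cdot}$ contributions into smaller bipartite pieces, requiring the reverse direction of the inequality above to show that the $I_C$ contribution is dominated per vertex by $A$ or $B$. Identifying the dominant regime from $(\delta, H)$ and ruling out exotic small connected graphs $F$ with per-vertex rate $\hom(F,H)^{1/|F|} > \max(A, B, k)$ is the most delicate piece of the argument, and will likely require a careful finite structural analysis in the spirit of Engbers' treatment of $\delta \in \{1,2\}$, with $\delta_0(H)$ chosen large enough to eliminate the residual small extremisers.
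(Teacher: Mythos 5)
The statement you are trying to prove is not a theorem of the paper: it is the closing \emph{conjecture} of Section \ref{sec:conclusion}, which the authors explicitly leave open. So there is no proof in the paper to compare against, and the question is whether your sketch actually closes the problem. It does not. The skeleton you describe (factor $\hom(G,H)$ over components, handle components of size at least $\kappa(\delta,H)$ via Corollary \ref{cor:connected}, handle the finitely many small component types separately, then optimise $A^{n_A}B^{n_B}\hom(K_{\delta,n_C-\delta},H)$ over the simplex) is the natural and essentially forced outline, and the paper's own proof of Theorem \ref{thm:largedelta} carries out exactly this programme in the special case $H\in\mathcal{H}_k$, where Lemma \ref{lem:boundlargedelta} shows \emph{every} connected component satisfies $\hom(G_i,H)<k^{|G_i|-1}$, so that no small component can compete and the three-way maximum degenerates to the single candidate $K_{\delta,n-\delta}$.

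The genuine gap is the step you defer to ``a careful finite structural analysis'': showing that for $\delta\geq\delta_0(H)$ every connected graph $F$ with $\delta\leq |F|+ 1$, minimum degree $\delta$ and $|F|<\kappa(\delta,H)$ satisfies the per-component bound, i.e.\ that there are no exotic small extremisers with per-vertex rate exceeding $\max(A,B,k)$. This is not a technicality --- it is the entire content of the conjecture. Section \ref{sec:counterexample} of the paper shows that for fixed $\delta$ and $H=kK_t$ the extremal small component is the Tur\'an graph $T_t(t\alpha)$, which is neither $K_{\delta+1}$ nor $K_{\delta,\delta}$ and beats all three candidates; the conjecture's hypothesis $\delta\geq\delta_0(H)$ is intended to exclude such examples, but you give no mechanism by which large $\delta$ accomplishes this, and Engbers' treatment of $\delta\in\{1,2\}$ that you cite as a model was already a substantial bespoke argument that does not obviously scale. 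Two smaller issues: your merging inequality $\hom(K_{\delta,a-\delta},H)\cdot\hom(K_{\delta,b-\delta},H)\leq\hom(K_{\delta,a+b-\delta},H)$ compares quantities of order $s(\delta,H)^2k^{a+b-2\delta}$ and $s(\delta,H)k^{a+b-\delta}$, so it requires $s(\delta,H)\leq k^{\delta}$, which is not justified for general $H$; and the final ``log-linearity'' step is not literally available because $\hom(K_{\delta,m-\delta},H)^{1/m}$ is not constant in $m$, so the vertex-of-the-simplex argument needs the exact per-component inequalities rather than their exponential approximations. As written, the proposal is a correct identification of the difficulty rather than a resolution of it.
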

\noindent
This conjecture implies that, for a fixed graph $H$ and $\delta$ sufficiently large, the following holds: for sufficiently large $n$ satisfying suitable divisibility conditions, the number of $H$-colourings is always maximised by one of $\frac n{\delta+1}K_{\delta+1}$, $\frac n{2\delta}K_{\delta,\delta}$ or $K_{\delta,n-\delta}$.

\end{document}